\newtheorem{Theorem} {Theorem} [section]
\newtheorem{Proposition} [Theorem] {Proposition}
\newtheorem{Lemma} [Theorem] {Lemma}
\newtheorem{Corollary} [Theorem] {Corollary}
\newtheorem{Example} [Theorem] {Example}
\newtheorem{Problem} [Theorem] {Problem}
\newtheorem{Remark} [Theorem] {Remark}
\newtheorem{Conjecture} [Theorem] {Conjecture}
\newcommand{\Ef}{{\mathbb E}}
\newcommand{\Ff}{{\mathbb F}}
\newcommand{\Rf}{{\mathbb R}}
\newcommand{\cC}{{\mathcal C}}
\newcommand{\cD}{{\mathcal D}}
\newcommand{\cF}{{\mathcal F}}
\newcommand{\cH}{{\mathcal H}}
\newcommand{\cL}{{\mathcal L}}
\newcommand{\cP}{{\mathcal P}}
\newcommand{\cQ}{{\mathcal Q}}
\newcommand{\cR}{{\mathcal R}}
\newcommand{\cS}{{\mathcal S}}
\newcommand{\im}{\mathrm{Im}}
\newcommand{\PGammaL}{\mathrm{P\Gamma L}}
\newcommand{\Sp}{\text{Sp}}
\newcommand{\wt}{\text{wt}}
\newcommand{\eps}{\varepsilon}
\newcommand{\<}{\langle}
\renewcommand{\>}{\rangle} 
\renewcommand{\phi}{\varphi} 
\newcommand{\gauss}[2]{\genfrac{[}{]}{0pt}{}{#1}{#2}}
\title{Degree 2 Boolean Functions on Grassmann Graphs}
\author{

Jan De Beule\footnotemark[1],

Jozefien D'haeseleer\footnotemark[2],\\

Ferdinand Ihringer\footnotemark[2],

Jonathan Mannaert\footnotemark[1]
}
\date{17 Oct 2022}
\begin{document}
\maketitle
\footnotetext[1]{Department of Mathematics and Data Science, research group Digital Mathematics, 
Vrije Universiteit Brussel, B--1050 Brussels, Belgium, 
{\tt \href{mailto:Jan.De.Beule@vub.be}{\{Jan.De.Beule,}\href{mailto:Jonathan.Mannaert@vub.be}{Jonathan.Mannaert\}@vub.be}}}
\footnotetext[2]{
Dept.~of Mathematics:~Analysis, Logic and Discrete
Math., Ghent University, Belgium,
\url{Jozefien.Dhaeseleer@ugent.be}, \url{Ferdinand.Ihringer@gmail.com}}

\begin{abstract}
 We investigate the existence of Boolean degree $d$ functions 
 on the Grassmann graph of $k$-spaces in the vector space $\Ff_q^n$.
 For $d=1$ several non-existence and classification results are known,
 and no non-trivial examples are known for $n \geq 5$. 
 This paper focusses on providing a list of examples on the case $d=2$ 
 in general dimension and in particular for $(n, k)=(6,3)$ and $(n,k) = (8, 4)$.
 
 We also discuss connections to  
 the analysis of Boolean functions, 
 regular sets/equitable bipartitions/perfect 2-colorings in graphs, 
 $q$-analogs of designs, and permutation groups. In particular, this
 represents a natural generalization of Cameron-Liebler
 line classes.
\end{abstract}

\section{Introduction}

The research presented here is motivated by a variety of open problems
in only loosely related areas such as finite geometry,
Boolean function analysis, association schemes and design theory.
Since it seems reasonable to assume that most readers
are not familiar with concepts and conventions
in all of these areas, we provide a relatively
long introduction. We refer to \cite{FI2019}
for a more detailed discussion of degree $1$ functions.

\smallskip 

More technical details and definitions are omitted
from the the introduction and can be found in 
Section \ref{sec:prelim}.

\subsection{Low Degree Boolean Functions} 

It is a well-known fact that one can write any $0,1$-valued (Boolean)
function on the hypercube $\{0, 1\}^n$ as a real, multilinear
polynomial of degree at most $n$. The study of such functions
which we can write as a polynomial of some bounded degree $d$
has been very fruitful. For instance, it has been observed
countless times that a Boolean degree one function on the hypercube
is of the form $0$, $1$, $x_i$, or $1-x_i$ for 
some $i \in \{ 1, \ldots, n\}$:

\begin{Theorem}[Folklore]
    A Boolean degree $1$ function on the hypercube depends on
    at most one coordinate.
\end{Theorem}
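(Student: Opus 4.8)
The plan is to use the uniqueness of the multilinear representation together with the idempotency relation that every Boolean function satisfies pointwise on the cube. First I would recall that every function $\{0,1\}^n \to \Rf$ has a \emph{unique} representation as a multilinear polynomial, obtained from any polynomial representative by repeatedly replacing $x_i^2$ with $x_i$. Writing the given Boolean degree $1$ function as $f(x) = a_0 + \sum_{i=1}^n a_i x_i$ with $a_i \in \Rf$, it then suffices to show that at most one of $a_1, \dots, a_n$ is nonzero.

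Next, since $f$ takes only the values $0$ and $1$ on $\{0,1\}^n$, we have $f(x)^2 = f(x)$ for every $x \in \{0,1\}^n$. Expanding $f^2$ and reducing modulo the relations $x_i^2 = x_i$ produces a multilinear polynomial whose constant term is $a_0^2$, whose coefficient of $x_i$ is $a_i^2 + 2a_0 a_i$, and whose coefficient of the quadratic monomial $x_i x_j$ (for $i \neq j$) is $2 a_i a_j$. By the uniqueness of the multilinear representation this polynomial must coincide with $f$ itself, which has no quadratic terms; hence $a_i a_j = 0$ for all $i \neq j$. Consequently at most one coefficient $a_i$ with $i \geq 1$ can be nonzero, and $f$ depends on at most one coordinate.

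There is essentially no hard step here; the only point that requires care is the appeal to uniqueness of the multilinear expansion, which is what legitimizes passing from ``$f^2$ and $f$ agree as functions on the cube'' to ``their multilinear representations agree coefficientwise.'' If one prefers to avoid this, the same conclusion follows by a direct combinatorial argument: if $a_i$ and $a_j$ were both nonzero, then restricting $f$ to the four points of the $2$-dimensional subcube spanned by directions $i$ and $j$ (all other coordinates fixed) yields a value pattern with $f$ differing across every edge of that square, which forces the two ``diagonal'' sums to be $0$ and $2$ respectively; but the affine form $a_0 + a_i x_i + a_j x_j$ always gives equal diagonal sums, a contradiction.
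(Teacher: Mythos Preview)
Your argument is correct. The multilinear expansion of $f^2$ is computed properly, and invoking uniqueness of the multilinear representation to force $a_ia_j=0$ for $i\neq j$ is exactly the right move. The alternative subcube argument is also valid: on the four vertices of the $(i,j)$-face the affine form takes values $c,\,c+a_i,\,c+a_j,\,c+a_i+a_j$, so both diagonal sums equal $2c+a_i+a_j$, while a $\{0,1\}$-valued function that changes along every edge must have diagonal sums $0$ and $2$.

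Note that the paper itself does \emph{not} supply a proof of this theorem; it is stated as folklore and left unproved. There is therefore nothing to compare your approach against --- your write-up stands on its own as a complete and standard proof.
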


One of the fundamental results in {\it Boolean function analysis}
(see \cite{ODonnell2014} for a detailed introduction) is a 
characterization by Nisan and Szegedy of Boolean degree $d$ functions
in \cite{NiS1994}.

\begin{Theorem}[Nisan, Szegedy (1994)] \label{thm:NiS}
  A Boolean degree $d$ function on the hypercube depends on
  at most $d 2^{d-1}$ coordinates.
\end{Theorem}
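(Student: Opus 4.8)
The plan is to prove this through the standard two-ingredient argument: a uniform lower bound on the \emph{influence} of each coordinate that $f$ genuinely depends on, together with an upper bound on the \emph{total influence} forced by the degree hypothesis. Normalize $f\colon\{-1,1\}^n\to\{-1,1\}$ and write its Fourier--Walsh expansion $f=\sum_{S\subseteq[n]}\hat f(S)\,\chi_S$ with $\chi_S(x)=\prod_{i\in S}x_i$; the degree hypothesis reads $\hat f(S)=0$ for all $|S|>d$. For each $i$ let $D_i f(x)=\tfrac12\big(f(x^{i\to 1})-f(x^{i\to -1})\big)$ be the discrete $i$-th derivative: it does not involve $x_i$, has degree at most $d-1$, takes values in $\{-1,0,1\}$, and vanishes identically precisely when $f$ does not depend on coordinate $i$. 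The influence $\mathrm{Inf}_i(f):=\Pr_x\big[f(x^{i\to 1})\neq f(x^{i\to -1})\big]$ then equals $\mathbb{E}\big[(D_i f)^2\big]=\Pr_x[D_i f(x)\neq 0]$, and it also has the Fourier expression $\mathrm{Inf}_i(f)=\sum_{S\ni i}\hat f(S)^2$.

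Step one is the easy half. Summing the Fourier expression over $i$ and using Parseval, $\sum_{i=1}^n\mathrm{Inf}_i(f)=\sum_S|S|\,\hat f(S)^2\le d\sum_S\hat f(S)^2=d\,\mathbb{E}[f^2]=d$, where the inequality uses $\hat f(S)=0$ for $|S|>d$ and the last equality uses that $f$ is $\pm1$-valued.

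Step two, the crux, is that if $f$ depends on coordinate $i$ then $\mathrm{Inf}_i(f)\ge 2^{1-d}$. Via the reformulation $\mathrm{Inf}_i(f)=\Pr[D_i f\neq 0]$ and the fact that $D_i f$ is a nonzero multilinear polynomial of degree $\le d-1$, this reduces to the general claim: a nonzero real multilinear polynomial $g$ of degree at most $e$ on $\{-1,1\}^m$ is nonzero on at least a $2^{-e}$ fraction of its $2^m$ points. I would prove this by induction on $m$: write $g(x)=g_0(x')+x_m\,g_1(x')$ with $x'=(x_1,\dots,x_{m-1})$ and $\deg g_1\le e-1$; if $g_1\equiv 0$ then $g=g_0$ does not involve $x_m$, so applying the induction hypothesis to $g_0$ (same $e$, one fewer variable) and doubling the count finishes, while if $g_1\not\equiv 0$ the induction hypothesis gives that $g_1$ is nonzero on at least a $2^{-(e-1)}$ fraction of $\{-1,1\}^{m-1}$, and at any such point $y$ the two extensions $g(y,1)$ and $g(y,-1)$ cannot both vanish (otherwise $g_1(y)$ would), so $g$ is nonzero on at least a $2^{-(e-1)}/2=2^{-e}$ fraction of $\{-1,1\}^m$. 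Taking $g=D_i f$ and $e=d-1$ yields $\mathrm{Inf}_i(f)\ge 2^{1-d}$.

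Finally, let $R\subseteq[n]$ be the set of coordinates on which $f$ depends. Combining the two steps, $|R|\cdot 2^{1-d}\le\sum_{i\in R}\mathrm{Inf}_i(f)\le\sum_{i=1}^n\mathrm{Inf}_i(f)\le d$, whence $|R|\le d\,2^{d-1}$. The main obstacle is Step two; the induction above is a clean substitute for the route taken by Nisan and Szegedy, which instead derives the influence lower bound from A.~A.~Markov's inequality on the derivative of a univariate real polynomial (after restricting $f$ to a suitable line and symmetrizing). Either way, the point requiring care is that the auxiliary object $D_i f$ faithfully detects dependence on $x_i$ and is $\{-1,0,1\}$-valued, so that ``$f$ has small influence in direction $i$'' coincides with ``$D_i f$ vanishes on almost all inputs''.
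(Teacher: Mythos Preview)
Your proof is correct, but note that the paper does not actually supply its own proof of this theorem: it is stated as background in the introduction and attributed to Nisan and Szegedy \cite{NiS1994}, with no argument given. So there is nothing in the paper to compare against line by line.

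That said, your write-up is the standard modern treatment and is complete. The total-influence bound $\sum_i \mathrm{Inf}_i(f)\le d$ via Parseval is immediate, and your induction for the ``nonzero multilinear polynomial of degree $\le e$ is nonzero on at least a $2^{-e}$ fraction of the cube'' lemma is clean and self-contained. As you yourself remark, this replaces the original Nisan--Szegedy route, which passed through Markov's univariate inequality after symmetrization; your combinatorial induction is arguably more transparent and avoids any analytic detour, at no cost in the final constant. One minor stylistic point: it is worth stating explicitly the base case $m=0$ (a nonzero constant is nonzero on the single point, and $1\ge 2^{-e}$), and that the induction on $m$ is uniform over all $e$, since Case~1 invokes the hypothesis at $(m-1,e)$ while Case~2 invokes it at $(m-1,e-1)$.
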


Let $\gamma(d)$ denote the optimal upper bound for given $d$, that is
there exists a Boolean degree $d$ function depending on $\gamma(d)$
coordinates, but not depending on $\gamma(d)+1$ coordinates.
Nisan and Szegedy showed that $\gamma(d) \leq d 2^{d-1}$.
They also described a Boolean degree $d$ function with 
$2^{d}-1$ relevant variables.
Recently, better upper and lower bounds of magnitude 
$O(2^d)$ were found, see \cite{CHS2020} and subsequent work.
The better lower bound was, in a different context, 
first observed in \cite{CT2002}.

In the last few years, there has been interest in comparable results
on domains different from the hypercube. For instance see \cite{DFLLY2020}
or \cite{FI2019} and the references therein. 
One example is the {\it Johnson graph} $J(n, k)$, also
known as {\it slice} of the hypercube, which consists of all 
$k$-subsets of $\{ 1, \ldots, n \}$, two subsets adjacent when 
their intersection has size $k-1$. For instance, 
a classification of Boolean degree $1$ functions on the Johnson
graph has been obtained several times independently,
see \cite{Filmus2016} and \cite{Meyerowitz1992}. Note that
$k$ and $n-k$ have to be at least $2$ as otherwise all functions
have degree $1$.

\begin{Theorem}
    Let $n-k, k \geq 2$.
    A Boolean degree $1$ function on the Johnson graph $J(n, k)$ depends on
    at most one coordinate.
\end{Theorem}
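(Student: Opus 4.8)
The plan is to exploit the small graph $J(4,2)$, which the hypotheses $k \geq 2$ and $n-k \geq 2$ make available as a substructure, and then propagate the resulting local constraint to all $n$ coordinates. By definition a Boolean degree $1$ function is the restriction to the slice of a multilinear polynomial of degree at most $1$, so I may write $f(S) = a_0 + \sum_{i \in S} a_i$ for reals $a_0, a_1, \dots, a_n$, and the goal is to show that all $a_i$ but at most one are equal.

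First I would reduce to $J(4,2)$. Fix \emph{any} $4$-subset $U \subseteq \{1,\dots,n\}$ and, using $k \geq 2$ and $n-k \geq 2$, partition $\{1,\dots,n\}\setminus U$ into a set $A$ of size $k-2$ and a set $B$ of size $n-k-2$. Restricting $f$ to the $k$-sets of the form $S = A \cup S'$ with $S' \in \binom{U}{2}$ gives $g(S') = \bigl(a_0 + \sum_{i\in A} a_i\bigr) + \sum_{i\in S'} a_i$, which is a Boolean degree $1$ function on the Johnson graph of $2$-subsets of $U$, i.e.\ on $J(4,2)$, and its coefficients on $U$ are the original $a_i$, $i\in U$.

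The core step is the analysis of $J(4,2)$. If $\{P,P^c\}$ is a pair of complementary (disjoint) $2$-subsets of $U$, then $g(P)+g(P^c)$ equals twice the constant term of $g$ plus $\sum_{i\in U} a_i$, which is independent of the choice among the three complementary pairs; hence these three sums of two $\{0,1\}$-values all coincide. If the common value is $0$ or $2$ then $g$ is constant; if it is $1$ then $g$ takes the value $1$ on exactly one member of each pair, i.e.\ on a transversal of the three pairs. Enumerating the $2^3 = 8$ transversals, one finds that each equals $\{T : i \in T\}$ or $\{T : i \notin T\}$ for a single $i \in U$, so $g$ is a dictator, an anti-dictator, or constant; in every case solving the defining linear system shows that at least three of the four numbers $a_i$, $i\in U$, are equal.

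It remains to globalize. Since the previous paragraph applies to \emph{every} $4$-subset $U$, the multiset $\{a_1,\dots,a_n\}$ has the property that among any four of its entries at least three are equal. A short pigeonhole argument then forces all $a_i$ equal except possibly one: if two distinct indices $p,q$ both carried a non-majority value, choosing two indices $r,s$ carrying the majority value $c$ would make $\{p,q,r,s\}$ a $4$-set with no three equal entries. So $a_i = c$ for all $i$ except possibly one index $j$, whence $f(S)$ takes only the two values $a_0 + kc$ (when $j \notin S$) and $a_0 + (k-1)c + a_j$ (when $j \in S$), both necessarily in $\{0,1\}$; thus $f$ is constant, $\mathbf{1}[j \in S]$, or $\mathbf{1}[j \notin S]$, each depending on at most one coordinate. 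The only genuine obstacle is the base case: it is exactly in passing to $J(4,2)$ that both hypotheses $k,n-k\geq 2$ are used, since the weaker substructure $J(3,1)=K_3$ yields only that the $a_i$ take at most two values differing by $1$, which is too weak; so one must verify carefully that $J(4,2)$ admits no exotic degree $1$ function.
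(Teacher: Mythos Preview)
Your proof is correct. The paper itself does not prove this theorem; it is stated in the introduction as a known result, with references to Filmus (2016) and Meyerowitz (1992), so there is no in-paper argument to compare against.

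Your route --- fix a representation $f(S)=a_0+\sum_{i\in S}a_i$, restrict to a copy of $J(4,2)$ sitting on an arbitrary $4$-set $U$ to force any four of the coefficients $a_i$ to have at least three equal, then globalise --- is clean and self-contained. The $J(4,2)$ analysis via complementary pairs is tight: the eight transversals of the three antipodal pairs are exactly the four dictators and four anti-dictators, so nothing is missed there. Two small remarks on presentation: (i) in the pigeonhole step you tacitly assume that some value is taken by at least two of the $a_i$; when that fails all $a_i$ are distinct and, since $n\geq 4$, any four indices already violate the ``three of four equal'' constraint, so this case is immediate. (ii) The representation $a_0+\sum_i a_i x_i$ is not unique on the slice (because $\sum_i x_i=k$ there), but your argument only needs \emph{some} such representation, and the conclusion ``$f$ depends only on whether $j\in S$'' is representation-independent, so this is not a gap.
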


Filmus and the third author generalized the result by Nisan and Szegedy
to the Johnson graph \cite{FI2019a}.

\begin{Theorem}[Filmus et al. (2019)] \label{thm:NiS_slice}
    There exists a constant $C$ such that the following holds. 
    If $C^d \leq k \leq n-C^d$ and 
    $f: \binom{\{ 1,\ldots, n\}}{k} \rightarrow \{ 0, 1\}$ has degree $d$,
    then $f$ depends on at most $\gamma(d)$ coordinates.
\end{Theorem}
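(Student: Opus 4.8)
The plan is to reduce everything to the Boolean cube and apply Theorem~\ref{thm:NiS}. By replacing $f(A)$ with $f(\{1,\dots,n\}\setminus A)$ if necessary we may assume $k\le n-k$; this changes neither the degree nor the set of relevant coordinates. The main tool is a family of \emph{cube embeddings}. Fix $m$ pairwise disjoint pairs $P_i=\{a_i,b_i\}$ ($1\le i\le m$) and a set $T$ of size $k-m$ disjoint from $\bigcup_i P_i$; such a configuration exists whenever $m\le\min(k,n-k)$. Define $\iota\colon\{0,1\}^m\to\binom{\{1,\dots,n\}}{k}$ by $\iota(x)=T\cup\{a_i:x_i=1\}\cup\{b_i:x_i=0\}$. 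In the indicator variables $z_e$ of the slice this substitutes $z_{a_i}=x_i$, $z_{b_i}=1-x_i$, and a constant for every other $z_e$, so each monomial of degree $\le d$ becomes a polynomial of degree $\le d$ in the $x_i$; hence $f\circ\iota$ is a Boolean function of degree $\le d$ on $\{0,1\}^m$. By Theorem~\ref{thm:NiS} it depends on at most $\gamma(d)$ coordinates; equivalently, along the image of $\iota$ the function $f$ is sensitive to at most $\gamma(d)$ of the swaps $a_i\leftrightarrow b_i$.

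First I would use this to prove a crude junta bound: $f$ depends on at most $M=M(d)$ coordinates, where $M$ depends only on $d$. The natural route is a random-restriction argument: restrict $f$ to a sub-slice $\binom{U}{k'}$ obtained by fixing the membership status of the coordinates in $\{1,\dots,n\}\setminus U$ at random, where $|U|=n_0$ and $k'$ are chosen --- using the hypothesis $C^d\le k\le n-C^d$ to guarantee the room --- so that $n_0$, $k'$ and $n_0-k'$ all lie in a bounded balanced regime; the restriction is again Boolean of degree $\le d$. If $f$ had more than $M$ relevant coordinates one would want to show that such a restriction still has many relevant coordinates, and then a cube embedding of dimension greater than $\gamma(d)$ into the (now small) restricted slice would detect more than $\gamma(d)$ of them, contradicting the previous paragraph. \textbf{I expect this step to be the main obstacle}: the delicate point is the robustness claim --- that the relevance of a coordinate survives restriction with non-negligible probability --- which is essentially a slice analogue of bounding the block sensitivity of a degree-$d$ function by a function of $d$; proving it may require working directly with the harmonic (eigenspace) representation of $f$ rather than with an arbitrary polynomial representation.

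Finally I would sharpen $M(d)$ down to $\gamma(d)$. Choose the constant $C$ large enough that $M(d)\le C^d\le\min(k,n-k)$, and suppose $f$ is a junta on a coordinate set $J$ with $|J|=M(d)$. Apply one more cube embedding with $m\ge|J|$ pairs chosen so that $a_1,\dots,a_{|J|}$ enumerate $J$ while every remaining pair-element and every element of $T$ lies outside $J$; this is feasible because $|J|$ is a constant and $k,n-k\ge C^d$. Since $f$ is a $J$-junta, $f\circ\iota$ depends only on the coordinates $x_1,\dots,x_{|J|}$ belonging to pairs that meet $J$, and as a function of those it is exactly $f$ read off from $A\cap J$; consequently a coordinate of $J$ is relevant for $f$ if and only if $f\circ\iota$ depends on the corresponding $x_i$. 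Applying Theorem~\ref{thm:NiS} to the Boolean degree $\le d$ function $f\circ\iota$ therefore bounds the number of relevant coordinates of $f$ by $\gamma(d)$, as claimed.
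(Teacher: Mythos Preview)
This theorem is not proved in the present paper: it is quoted from \cite{FI2019a} as a known result and no argument is given here, so there is nothing in the paper to compare your proposal against line by line. I will therefore evaluate the proposal on its own merits.

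Your Steps~1 and~3 are fine. The cube embeddings do preserve degree, and once you know $f$ is a junta on some set $J$ of bounded size, a single embedding that places each element of $J$ in its own pair (with the partner outside $J$) identifies the relevant variables of $f$ with those of a Boolean degree~$\le d$ function on the cube; Theorem~\ref{thm:NiS} then gives the sharp bound $\gamma(d)$.

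The gap is Step~2, and you already see it. What you have written there is not a proof but a wish list: you would like relevance of a coordinate to survive a random restriction with probability bounded below in terms of $d$ only, and you would like, on the restricted small slice, to be able to pack many surviving relevant coordinates into a \emph{single} cube embedding that is simultaneously sensitive to more than $\gamma(d)$ of them. Neither claim is established. The first is exactly the substantive content of the theorem---a slice analogue of ``influence of a relevant variable is at least $2^{-O(d)}$''---and does not follow from anything you have written; an ``arbitrary polynomial representation'' indeed will not give it, but you have not carried out the harmonic-representation argument either. The second is also nontrivial: knowing that many coordinates are individually relevant on the small slice does not produce a common base point at which more than $\gamma(d)$ swaps change the value, which is what a contradiction via Theorem~\ref{thm:NiS} would require. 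In short, Step~3 is a clean reduction of the sharp constant to a crude one, but Step~2 \emph{is} the theorem, and your outline does not yet contain the idea that proves it.
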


As before, bounds on $k$ are necessary here, but $C^d \leq k \leq n-C^d$ seems overly
generous. 
Our (very) limited investigation here suggests 
that for $d=2$, we only need to exclude $(n, k) =(6, 3)$ 
and $k < 2d$, see Section \ref{sec:johnson}.
In fact, recently Filmus showed in \cite{Filmus2022} that if we do not 
insist on the upper bound $\gamma(d)$ derived from the hypercube, 
but just some upper bound in $O(1)$, then $2d \leq k \leq n-2d$ suffices.

Recently, Theorem \ref{thm:NiS_slice} has been extended to several other structures,
for instance the multislice \cite{FODW2019} by Filmus, O'Donnell and Wu, 
and to the perfect matching scheme by Dafni, Filmus, Lifshitz, Lindzey, and Vinayls \cite{DFLLY2020}.

\subsection{Cameron-Liebler Line Classes and 
Boolean Functions on the Grassmann Graph}\label{sec:intro_CL}

Our main focus are low degree Boolean functions on the {\it Grassmann graph} $J_q(n, k)$
which consists of all $k$-subspaces of an $n$-dimensional vector space over
the finite field of order $q$, two subspaces adjacent when their meet
has dimension $k-1$. Let $\cF$ be a family of $k$-spaces of $V \coloneqq \Ff_q^n$.
We read $\cF$ as a Boolean function over the reals, 
that is we identify it with the function $f$ from all $k$-spaces of $V$ to the reals,
where $f(S) = 1$ if $S \in \cF$ and $f(S) = 0$ otherwise.
Let $T$ be a subspace of $V$.
Let $x_T$ denote the family of all $k$-spaces
which are incident with $T$.
We say that a (not necessarily: Boolean) function $f$ 
has {\it degree $d$} if we can write
$f$ as a linear combination (over the reals) of all
$x_T$ with $\dim(T) = d$.

The study of Boolean degree $1$ functions, limited to $k=2$,
under the name of {\it Cameron-Liebler line classes} is actually older than 
most of the aforementioned results. In the Grassmann graph, $1$-spaces 
(in projective notation: {\it points})
or, equivalently, $(n-1)$-spaces ({\it hyperplanes}) are a natural choice for variables,
see \S\ref{sec:notation_BFA} for details. It was conjectured by Cameron
and Liebler in \cite{CL1982} that, as for the hypercube and the Johnson graph,
all degree $1$ examples are the trivial ones:

\begin{Conjecture}[Cameron, Liebler (1982)]\label{conj:CL}
    Let $n \geq 4$ and $k=2$.
    If $f$ is a Boolean degree $1$ function on the Grassmann graph $J_q(n, k)$,
    then $f$ depends on at most one point and one hyperplane.
\end{Conjecture}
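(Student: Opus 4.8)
The plan is to follow the template that settles the analogous statements on the hypercube and on the Johnson graph, namely to combine the spectral constraints coming from the degree-$1$ hypothesis with geometric counting. It should be said at the outset that the statement as written is \emph{false} for $n=4$, where non-trivial Cameron--Liebler line classes in $\PG(3,q)$ are known; so the genuine target is the range $n \ge 5$, where the conjecture is open. The starting point is the algebraic reformulation: the characteristic vector of a Boolean degree $1$ family $\cF$ lies in the span of the two ``top'' eigenspaces of the Grassmann scheme $J_q(n,k)$, so projecting onto these spaces forces, for every point $P$ and every hyperplane $H$, explicit formulae for $|\{S\in\cF : P \le S\}|$, $|\{S\in\cF : S \le H\}|$ and $|\{S\in\cF : P \le S \le H\}|$, each affine in the single parameter $x := |\cF|/\gauss{n-1}{k-1}$. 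The first concrete step is to write these tactical identities down, deduce that $x$ is a non-negative integer, and then — this is the genuinely useful output — extract strong divisibility and modular restrictions on $x$, in the spirit of the results of Metsch and of Gavrilyuk--Metsch for line classes.

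The second step is to turn an admissible integer $x$ in a ``small'' range, say $2 \le x \le g(q,n)$ for an explicit threshold $g$, into a contradiction by geometry: if $x$ is small, then for a typical point $P$ the $k$-spaces of $\cF$ through $P$ form a near-star, one argues that these local pictures are forced to glue into an honest star (or a hyperplane, or a union of such), and one recognises $\cF$ as one of the trivial functions; the range near the maximum is handled symmetrically by complementation. Closing the gap then amounts to pushing $g(q,n)$ upward until it collides with the modular constraints obtained in the first step, so that no intermediate value of $x$ survives.

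The main obstacle — and precisely the reason the conjecture fails at $n=4$ and remains open for $n\ge 5$ — is the intermediate range of $x$: the eigenvalue machinery is too blunt to exclude it, and the ``local structure'' arguments deteriorate as $x$ grows. The extra leverage available only for $n \ge 5$ is induction on $n$: restricting $\cF$ to a hyperplane, or quotienting by a point, yields a Boolean degree $1$ function on $J_q(n-1,k)$ or on $J_q(n-1,k-1)$, and one would hope the additional codimension forces the correction terms to vanish. The delicate point, and where the real work lies, is that the inductive hypothesis does not apply directly, since the low-dimensional building blocks are themselves non-trivial; one would need an argument showing that a genuinely $n$-dimensional, non-trivial example cannot have all of its hyperplane sections and point quotients trivial.
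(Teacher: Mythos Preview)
The statement you are attempting to prove is labeled \emph{Conjecture} in the paper, and for good reason: the paper does not prove it, but rather recounts its history. Immediately after stating it, the paper notes that Drudge disproved it for $(n,k,q)=(4,2,3)$, that many further counterexamples exist for $(n,k)=(4,2)$ with $q\ge 3$, and that for $n\ge 5$ the question remains open except for the small-$q$ cases $q\in\{2,3,4,5\}$ settled in \cite{FI2019}. So there is no proof in the paper against which to compare your proposal.

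You have correctly diagnosed all of this yourself: you note that the statement is false at $n=4$, that the genuine target is $n\ge 5$, and that the ``intermediate range of $x$'' is precisely the obstruction. But what you have written is not a proof; it is an accurate summary of the standard strategy (tactical identities, divisibility constraints, small-$x$ structure arguments, induction via hyperplane sections and point quotients) together with an honest admission that this strategy does not close. The sentence ``one would need an argument showing that a genuinely $n$-dimensional, non-trivial example cannot have all of its hyperplane sections and point quotients trivial'' is exactly the missing idea, not a step you have supplied. In short, your proposal identifies the gap rather than filling it, and no proof of the conjecture for general $q$ and $n\ge 5$ is currently known.
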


More explicitly, the conjecture suggests that $f$ is one of $0, x_P, x_H, x_P+x_H, 1, 1-x_P,
1-x_H, 1 -x_P-x_H$ for some $1$-space $P$ and some
$(n-1)$-space $H$ with $P \nsubseteq H$. In the terminology
of finite geometry, they suggested that an example either consists
of none of the lines ($2$-spaces), a point-pencil, a dual point-pencil,
the union of a point-pencil and a dual point-pencil or the complement
of any of these examples. In a breakthrough result, Drudge showed in 
his PhD thesis \cite{Drudge1998} that this conjecture fails for $(n,k,q) = (4,2,3)$.
Nowadays many counterexamples to the Conjecture of Cameron and Liebler
are known if $(n,k) = (4, 2)$ and $q \geq 3$; 
see \cite{BD1999,DDMR2016,FMRXZ2021,FMX2015,GMP2016,GP2005}.
The general case of $k>2$ has been investigated more recently,
for instance see \cite{BDBD2019,DBM2021,DBMS2021,FI2019,Metsch2017,RSV2018}. 
Indeed, it has been shown in \cite{FI2019}
that Conjecture \ref{conj:CL} holds for $k \geq 2$ and $q \in \{ 2,3,4,5\}$
if we exclude the case $(n,k) = (4,2)$.

In analogy to Theorem \ref{thm:NiS_slice},
it seems natural to assume that when $n-k$ and $k$ are large enough,
that is $n-k, k \geq C(d)$ for some $C(d)$ independent of $n$ and $k$,
then all degree $d$ functions on the Grassmann graph $J_q(n, k)$ 
only depend on very few coordinates $x_P$ and $x_H$, that is on
the intersection with very few points and hyperplanes. Our 
results here give an indication of what $C(d)$ could be.

In Section \ref{sec:glob_pl} and in Section \ref{sec:glob_sd}
we construct Boolean degree $2$ functions which 
depend on many coordinates. For this, we use 
finite symplectic and orthogonal geometries.

\subsection{Equitable Bipartitions}

A {\it regular set} or {\it equitable bipartition} 
of a $k$-regular graph $\Gamma$ is a subset $S$ of vertices of $\Gamma$
such that there exists constants $a$ and $b$ such that a vertex in $S$ has 
precisely $a$ neighbors in $S$ and such that a vertex not in $S$ has precisely $b$ neighbors in $S$. 
The eigenvalues of the {\it quotient matrix}
\[
 \begin{pmatrix}
  a & k-a \\
  b & k-b
 \end{pmatrix}
\]
are also eigenvalues of the adjacency matrix of $\Gamma$ \cite[Lemma 9.3.1]{AlgComb}.
Equitable bipartitions are known under various other names,
for instance {\it perfect $2$-colorings}, {\it completely regular codes},
or {\it intriguing sets}, see also \cite{FI2019} and the references therein.

Boolean degree $1$ functions in some classical 
lattices, in particular Johnson and Grassmann graphs,
are equitable bipartitions (for instance, this follows
from our discussion in \S\ref{sec:eqdef}). 
More generally, equitable bipartitions
correspond to Boolean degree $d$ functions which (in the terminology 
of Boolean function analysis) have no weights on degrees in $\{ 1, \ldots, d-1\}$. Regular sets on the hypercube
are well-investigated, primarly due to Fon-der-Flaass \cite{FonDerFlaass2007}. 
There has been recent work
on the Johnson graph, most notably the 
equitable bipartitions of the Johnson graph $J(n, 3)$ have been classified
for $n$ odd, see \cite{GG2013}.
Recently, this attracted much research:
regular sets of degree $2$ have been classified in the Johnson graph 
by Vorob'ev in \cite{Vorobev2020};
Metsch and De Winter investigated small 
equitable bipartitions in the the Grassmann graph 
of planes $J_q(n, 3)$ \cite{DWM2020};
Mogilnykh surveyed equitable bipartitions 
in $J_2(6, 3)$ and $J_q(n, 2)$ \cite{Mogilnykh2020}.

\section{Preliminaries} \label{sec:prelim}

\subsection{Projective Geometry}

Using projective notation, in $\Ff_q^n$ we denote $1$-space 
as {\it points}, $2$-spaces as {\it lines}, $3$-spaces as {\it planes},
$4$-spaces as {\it solids}, 
$(n-2)$-spaces as {\it colines}, 
and $(n-1)$-spaces as {\it hyperplanes}.
For a vector space $V$, let $\gauss{V}{k}$ denote its $k$-dimensional subspaces.
We denote the $q$-binomial (or: Gaussian) coefficient 
by $\gauss{n}{k}_q \coloneqq | \gauss{V}{k} |$ for $V = \Ff_q^n$.
We write $[n]_q \coloneqq \gauss{n}{1}_q = \frac{q^n-1}{q-1}$. Usually, we do not put the $q$
and write $\gauss{n}{k}$ and $[n]$.
Note that
\[
 \gauss{n}{k} = \prod_{i=0}^{k-1} \frac{[n-i]}{[k-i]}.
\]

Several of our constructions use well-known finite simple groups,
namely the symplectic group $\Sp(n, q)$ 
and orthogonal group $O^\eps(n, q)$.
We try to keep this reasonably self-contained by providing explicit 
equations, but we refer to standard literature on finite geometry
and the classical finite simple groups for more details, see \cite{Taylor1992}.

\subsection{Analysis of Boolean Functions} \label{sec:notation_BFA}

Recall the concept of a Booleean degree $d$ function in $\Ff_q^n$ 
on $k$-spaces from \S\ref{sec:intro_CL}.
We say that a Boolean function $f$ is a {\it $j$-junta}
if there is a set $J$ of points and hyperplanes with
$|J| = j$ such that we can write $f$ as a polynomial
in $x_R$, $R \in J$.
A rather trivial example of a $2$-junta is the set $f$ of 
$k$-spaces inside a fixed hyperplane $\pi$ or trough a 
fixed point $p\not \in \pi$. As we can write 
$f = x_p + \alpha \sum_{r \in \gauss{\pi}{1}} x_r + \beta \sum_{r \in \gauss{V}{1} \setminus \gauss{\pi}{1}} x_r$ for suitable real
constants $\alpha$ and $\beta$, this is a Boolean 
degree $1$ function. It only 
depends on $x_p$ and $x_\pi$, thus it is a $2$-junta.

\bigskip 

We summarize easy, well-known facts in the following lemma.
It shows that for fixed degree $d$, induction on $n$ and $k$ is feasible.
Therefore it motivates our study of small $n$ and $k$ to establish 
an inductive basis.

\begin{Lemma}
  Let $n-d \geq k \geq d \geq 1$.
  Let $f, g$ be Boolean degree $d$ functions on $J_q(n,k)$ with $d \geq 1$.
  Let $P$ be a $1$-space and $H$ a hyperplane of $V \coloneqq \Ff_q^n$.
  Then all of the following have degree $d$:
  \begin{enumerate}[(a)]
   \item The (not necessarily Boolean) functions $0$, $1$, $f+g$, $f-g$, and $1-f$.
   \item The set $\{ S \in \gauss{H}{k}: f(S) = 1 \}$.
   \item The set $\{ S/P \in \gauss{V/P}{k-1}: f(S) = 1 \}$.
  \end{enumerate}
\end{Lemma}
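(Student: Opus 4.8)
The plan is to verify each of the three items directly from the definition of degree $d$, namely that $f$ can be written as a real linear combination $f=\sum_{\dim T=d} c_T\, x_T$ of the indicator functions $x_T$ of the $k$-spaces through a fixed $d$-space $T$.

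Part (a) is immediate from linearity: the set of functions of degree at most $d$ is by definition the real span of $\{x_T : \dim T = d\}$, hence it is a vector space, so it contains $0$, is closed under sums and differences, and contains $f+g$, $f-g$. For the constant function $1$, one observes that $1=\sum_{S\in\gauss{V}{k}} [S=S]$ can be rewritten using a standard double-counting identity: fixing any $d$-space, $\sum_{\dim T=d} x_T$ counts, for each $k$-space $S$, the number of $d$-spaces it contains, which is the constant $\gauss{k}{d}$; hence $1=\gauss{k}{d}^{-1}\sum_{\dim T=d} x_T$ has degree $d$ (this uses $k\geq d$ so the coefficient is nonzero). Then $1-f$ has degree $d$ by closure under differences.

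Parts (b) and (c) are the substantive points. For (b), restrict attention to $k$-spaces inside the fixed hyperplane $H$. Writing $f=\sum_{\dim T=d} c_T x_T$ and evaluating only at $k$-spaces $S\subseteq H$, each term $x_T(S)$ equals $x_{T\cap H}(S)$ when $T\not\subseteq H$ (since $S\subseteq H$ forces $S\cap T = S\cap(T\cap H)$, and one must check that $T\cap H$ has the right dimension, namely $d-1$, generically); terms with $T\subseteq H$ are already indicators of $d$-spaces of $H$. The point is that inside the ambient graph $J_q(n-1,k)$ on subspaces of $H\cong\Ff_q^{n-1}$, every $x_{T\cap H}$ with $\dim(T\cap H)=d-1$ can be rewritten as a linear combination of indicators $x_{T'}$ with $\dim T'=d$ (a $(d-1)$-space is the intersection-type object one degree below, and $x_U=\gauss{k-\dim U}{d-\dim U}^{-1}\sum_{U\subset T',\ \dim T'=d} x_{T'}$ by the same counting identity, valid since $k\geq d$). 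So the restricted function is a real combination of $x_{T'}$, $\dim T'=d$, in $\gauss{H}{k}$, i.e.\ has degree $d$. Part (c) is the projective dual of (b): passing to the quotient $V/P$ sends $k$-spaces through $P$ to $(k-1)$-spaces of $V/P$, and $d$-spaces $T$ to $(T+P)/P$ of dimension $d$ or $d-1$; the indicator $x_T$ restricted to $k$-spaces containing $P$ becomes the indicator in $V/P$ of the image, and again any stray $(d-1)$-dimensional images get absorbed into degree-$d$ combinations by the counting identity (now using $k-1\geq d-1$, i.e.\ $k\geq d$, and on the quotient side the dimension bound $(n-1)-d\geq k-1$, i.e.\ $n-d\geq k$).

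The main obstacle is not conceptual but bookkeeping: one must be careful that when $T\not\subseteq H$ the intersection $T\cap H$ really has dimension exactly $d-1$ (true since $H$ is a hyperplane), and, more importantly, one must justify the ``degree-lowering is reversible'' step — that an indicator $x_U$ of a $u$-space with $u<d$ is itself a real linear combination of indicators of $d$-spaces containing $U$, which hinges on the hypothesis $k\geq d$ (so the relevant Gaussian binomial coefficient is a nonzero integer that can be inverted over $\Rf$) and on working in the correct ambient space where $\dim\geq k\geq d$ still holds (guaranteed by $n-d\geq k$ after the restriction/quotient). I would state this degree-lowering/raising identity once as a small sublemma and then apply it in both (b) and (c). Everything else is routine verification.
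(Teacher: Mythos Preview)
Your argument for (a) is fine and actually more careful than the paper's, which just says ``clearly''. Your argument for (c) is also correct in outline: for $S\supseteq P$ one has $x_T(S)=[T\subseteq S]=[(T+P)/P\subseteq S/P]$, so the restriction of $x_T$ becomes an indicator of a $d$- or $(d-1)$-space in $V/P$, and the latter can be lifted back to degree $d$ by your counting identity.

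However, your argument for (b) contains a genuine error. You claim that for $T\not\subseteq H$ and $S\subseteq H$ one has $x_T(S)=x_{T\cap H}(S)$, justified by $S\cap T=S\cap(T\cap H)$. But $x_T(S)$ is the indicator of $T\subseteq S$, not of any intersection condition; the identity $S\cap T=S\cap(T\cap H)$ is true but irrelevant. In fact $x_T(S)=0$ whenever $T\not\subseteq H$ and $S\subseteq H$, since $T$ has a point outside $H$ and hence outside $S$. (Concretely: take $d=k=2$, $n=4$; if $T\not\subseteq H$ and $S\subseteq H$ contains the point $T\cap H$, then $x_{T\cap H}(S)=1$ but $x_T(S)=0$.) So your subsequent machinery of rewriting $(d-1)$-space indicators is built on a false premise.

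The fix is much simpler than your approach: once you observe $x_T(S)=0$ for $T\not\subseteq H$, the restriction of $f$ to $\gauss{H}{k}$ is just $\sum_{T\in\gauss{H}{d}} c_T\, x_T$, which is manifestly degree $d$ in $H$. This is exactly what the paper does in one line. No degree-lowering sublemma is needed for (b); and for (c) the paper simply invokes projective duality with (b) rather than arguing directly as you do.
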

\begin{proof}
    Clearly, any linear combination of two degree $d$ functions has degree $d$.
    This shows (a).
    
    For (b): Write $f$ as 
    \[
    f = \sum_{T \in \gauss{V}{d}} c_T x_T.
    \]
    For a $k$-space $S \subseteq H$, we have that $f(S) = 1$ if and only if $h(S) = 1$, where
    \[
     h \coloneqq \sum_{T \in \gauss{H}{d}} c_T x_T.
    \]
    So $h$ has degree $d$ and is the characteristic function of the set.
    
    The statements (b) and (c) are dual.
\end{proof}

\bigskip 

We will show in Lemma \ref{lem:up_bnd} that 
there exists a relatively easy upper bound
which shows that in the Grassmann graph any
function depends on at most $C_{n,k} q^{n-k}$
coordinates.

\begin{Lemma}\label{lem:meet_in_spanning}
    Let $n \geq 2k \geq 2$.
    Then there exists a $q_0$ such that for all $q \geq q_0$
    there exists a family $\cH$ of $(n-k+1)$-spaces in $\Ff_q^n$
    with $|\cH| = k^2(n-k+1)$
    such that for each $k$-space $S$ we have that
    $\< S \cap \bigcup_{H \in \cH} H \> = S$.
\end{Lemma}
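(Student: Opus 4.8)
The plan is a probabilistic one: take the members of $\cH$ to be independent, uniformly random $(n-k+1)$-spaces of $V=\Ff_q^n$, and show that for $q$ large the resulting family has the required property with positive probability.

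First I would reformulate the condition combinatorially. For any $k$-space $S$ and any $(n-k+1)$-space $H$ we have $\dim(S\cap H)\ge\dim S+\dim H-n=1$, so $S\cap H$ is always a nonzero subspace of $S$, and $\langle S\cap\bigcup_{H\in\cH}H\rangle=\sum_{H\in\cH}(S\cap H)$. Hence the conclusion fails for a given $S$ precisely when there is a hyperplane $W$ of $S$ (a $(k-1)$-space with $W<S$) with $S\cap H\subseteq W$ for every $H\in\cH$. The key (elementary) observation is that $0\ne S\cap H\subseteq W$ forces $S\cap H\subseteq H\cap W$, hence $H\cap W\ne 0$; so it is enough to control, for each such pair $W<S$, the probability that a single uniformly random $(n-k+1)$-space meets the fixed $(k-1)$-space $W$ nontrivially.

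Next I would estimate that probability. Since $W$ and $H$ have complementary dimensions $k-1$ and $n-k+1$, picking a point of $W$ and then an $(n-k+1)$-space through it gives
\[
 \Pr_H[H\cap W\ne 0]\ \le\ [k-1]_q\cdot\frac{\gauss{n-1}{k-1}_q}{\gauss{n}{k-1}_q}\ =\ [k-1]_q\cdot\frac{[n-k+1]_q}{[n]_q}\ \le\ \frac{c}{q}
\]
for an absolute constant $c$ (say $c=4$ for $q\ge 2$, using $[m]_q<q^{m}/(q-1)$). Therefore, for a fixed pair $W<S$, a random family $\cH$ of size $m$ satisfies $S\cap H\subseteq W$ for all $H\in\cH$ with probability at most $(c/q)^{m}$. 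Then I would apply two union bounds: there are $\gauss{n}{k}_q[k]_q\le 8\,q^{k(n-k)+k-1}$ pairs $(S,W)$, so the probability that $\cH$ fails the spanning property for \emph{some} $k$-space $S$ is at most $8\,c^{m}q^{\,k(n-k)+k-1-m}$. Choosing $m=k^{2}(n-k+1)$ makes the exponent of $q$ at most $-1$, so this is $o(1)$ as $q\to\infty$, in particular $<1$ once $q\ge q_0$ for some $q_0=q_0(n,k)$. For such $q$ a suitable family of size $k^{2}(n-k+1)$ exists; if $k\ge 2$ its members may moreover be taken pairwise distinct, since there are $\gauss{n}{n-k+1}_q\gg k^{2}(n-k+1)$ candidate spaces and (having produced any good family of size at most $k^2(n-k+1)$) deleting repetitions and then padding with fresh $(n-k+1)$-spaces only enlarges $\bigcup_{H\in\cH}H$ and so preserves the property; the case $k=1$ is trivial.

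I expect the only genuine content to be the estimate $\Pr_H[H\cap W\ne 0]=O(1/q)$ for complementary dimensions; everything else is bookkeeping with Gaussian binomial coefficients. It is worth noting that the constant $k^{2}(n-k+1)$ is far from optimal: the identical computation already goes through with $m=k(n-k+1)$.
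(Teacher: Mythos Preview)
Your argument is correct and complete. The only cosmetic quibble is that the explicit constants ``$8$'' and ``$c=4$'' are not literally valid for every $q\ge 2$ and every $k$, but since the statement only concerns $q\ge q_0(n,k)$ this is harmless: for large $q$ one has $\gauss{n}{k}_q[k]_q=(1+o(1))q^{k(n-k)+k-1}$ and $[k-1]_q[n-k+1]_q/[n]_q=(1+o(1))q^{-1}$, which is all you use.

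Your route, however, is not the paper's. The paper tracks the growth of $T_m(S)=\langle S\cap\bigcup_{i\le m}H_i\rangle$ one step at a time: it shows inductively that the probability $p_m$ of $\dim T_m(S)<m$ satisfies $p_m<(1+o(1))q^{m-k-1}$, since a fresh $H_m$ meets $S$ in a point outside $T_{m-1}(S)$ with probability at least $1-[m-1]/[k]$. It then takes $k(n-k+1)$ independent blocks of $k$ spaces and uses $p_k^{\,k(n-k+1)}$ together with $\gauss{n}{k}_q\le(1+o(1))q^{k(n-k)}$ to push the expected number of bad $S$ below~$1$. You instead dualise the failure event: $\sum_H(S\cap H)\ne S$ means all $S\cap H$ lie in a common hyperplane $W$ of $S$, which forces $H\cap W\ne 0$, an event of probability $O(1/q)$ because $\dim W+\dim H=n$. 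A union bound over the $\gauss{n}{k}_q[k]_q$ pairs $(S,W)$ then finishes. Your approach avoids the induction and the batching into groups of $k$, and as you observe it already works with $m=k(n-k+1)$ rather than $k^2(n-k+1)$; the paper's step-by-step analysis, on the other hand, gives a sharper single-batch estimate $p_k=O(1/q)$ that could be reused elsewhere. Both proofs are short first-moment arguments, but yours is the more streamlined of the two.
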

\begin{proof}
    For a $k$-space $S$ and $(n-k+1)$-spaces 
    $H_1, \ldots, H_m$ with $m \leq k^2(n-k)$
    put $T_m(S) = \< S \cap \bigcup_{i=1}^m H_i \>$.
    Consider the property (P) that $\dim(T_m(S)) < m$.
    First we calculate the probability $p_m$ that
    random $H_1, \ldots, H_m$ have property (P).
    We claim that $ p_m < (1+o(1)) q^{m-k-1}$ (as $q \rightarrow \infty$) and 
    our proof proceeds by induction on $m$.
    
    Clearly, $p_1=0$ as $H_1$ intersects $S$ nontrivially.
    For $m>1$, the probability that $H_1, \ldots, H_{m-1}$
    satisfy (P) is $p_{m-1}$. 
    Suppose that $\dim(T_{m-1}(S)) \geq m-1$.
    If $\dim(T_{m-1}(S)) > m-1$, then $\dim(T_m(S)) \geq m$,
    so property (P) is not satisfied.
    If $\dim(T_{m-1}(S)) = m-1$, then
    there are $[k]-[m-1] = q^{m-1}[k-m+1]$ points in $S \setminus T_{m-1}(S)$
    and $[m-1]$ points in $T_{m-1}(S)$.
    Hence, the probability that $H_m$ meets $S$ only in $T_{m-1}(S)$
    is at most $\frac{[m-1]}{[k]}$ (as $H_m$ meets $S$ nontrivially
    and this is the probability of a point of $S$ being in $T_{m-1}(S)$). Hence, by the union bound for the two cases $\dim(T_{m-1}(S)) < m-1$ and $\dim(T_{m-1}(S)) \geq m-1$,
    \begin{align*}
     p_m < p_{m-1} + \frac{[m-1]}{[k]} < (1+o(1)) q^{m-k-1}.
    \end{align*}
    This completes the proof of the claim.

    Now let us pick $k^2(n-k+1)$ random $(n-k+1)$-spaces.
    Let $X$ denote the random variable which counts
    the number of $k$-spaces $S$ with $\dim(T_d(S)) < k$.
    Recall that $\gauss{n}{k} < (1+o(1)) q^{k(n-k)}$.
    Then
    \begin{align*}
      \Ef(X) = \gauss{n}{k} \cdot p_k^{k(n-k+1)} 
             < (1+o(1)) q^{k(n-k)} \cdot q^{-k(n-k+1)} < 1.
    \end{align*}
    Hence, by linearity of expectation,
    there exists a choice of $k^2(n-k+1)$ $(n-k+1)$-spaces
    with $T_m(S) = S$ for all $k$-spaces $S$.
\end{proof}

\begin{Lemma} \label{lem:up_bnd}
  Let $1 < k < n-1$. Then there exists a
  $q_0$ such that for all $q \geq q_0$ we have that
  any Boolean function on $J_q(n, k)$
  is a $k^2(n-k+1) \frac{q^{n-k+1}-1}{q-1}$-junta.
\end{Lemma}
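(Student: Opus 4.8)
The plan is to extract from Lemma~\ref{lem:meet_in_spanning} a small set $J$ of points such that every $k$-space is determined by the members of $J$ incident with it, and then to write $f$ as a polynomial in the variables $x_P$, $P\in J$, by interpolation. First I would dispose of the case $n<2k$ by duality: then $2k\ge n$ gives $n\ge 2(n-k)$, and $n-k\ge 1$ since $k\le n-2$, so running the argument below in the dual space of $\Ff_q^n$ (replacing each $k$-space by its annihilator, points by hyperplanes, spans by intersections, and invoking Lemma~\ref{lem:meet_in_spanning} with parameter $n-k$) yields a junta built from hyperplanes, of size $(n-k)^2(k+1)\tfrac{q^{k+1}-1}{q-1}$ --- the displayed bound with $k$ and $n-k$ interchanged. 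So assume $n\ge 2k$. Fix $q$ at least the threshold supplied by Lemma~\ref{lem:meet_in_spanning} and let $\cH$ be the family it provides: $k^2(n-k+1)$ spaces of dimension $n-k+1$ with $\langle S\cap\bigcup_{H\in\cH}H\rangle=S$ for every $k$-space $S$. Put $J\coloneqq\bigcup_{H\in\cH}\gauss{H}{1}$, the set of all points lying on some member of $\cH$; since an $(n-k+1)$-space contains $\tfrac{q^{n-k+1}-1}{q-1}$ points, $|J|\le k^2(n-k+1)\tfrac{q^{n-k+1}-1}{q-1}$.

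Next I would observe that, for every $k$-space $S$, the set $\{P\in J: P\subseteq S\}$ is exactly $S\cap\bigcup_{H\in\cH}H$ regarded as a set of points, so its span equals $S$; consequently $S$ is determined by the $0/1$-vector $(x_P(S))_{P\in J}$, and hence $f(S)$ depends only on these finitely many $0/1$-values. Any function of such values is a multilinear polynomial in the $x_P$, $P\in J$: interpolating over the incidence vectors $v$ that actually occur gives $f=\sum_v f_v\prod_{P: v_P=1}x_P\prod_{P: v_P=0}(1-x_P)$, where $f_v$ is the constant value of $f$ on the $k$-spaces with incidence vector $v$. Thus $f$ is a $|J|$-junta, and $|J|$ satisfies the stated bound (pad $J$ with arbitrary further points if equality in the count is wanted).

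The point count and the interpolation identity are routine. The step I expect to require the most care is the reduction to $n\ge 2k$: one must check that ``a point is contained in a subspace'' dualises to ``a subspace is contained in a hyperplane'', so that in the dual picture every $k$-space is still pinned down by the hyperplanes of the dualised $J$ through it, and that substituting the parameter $n-k$ for $k$ is admissible in Lemma~\ref{lem:meet_in_spanning}.
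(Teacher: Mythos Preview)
Your proposal is correct and follows essentially the same route as the paper: reduce to $k\le n/2$ by duality, invoke Lemma~\ref{lem:meet_in_spanning}, and take $J$ to be the set of points covered by the resulting $(n-k+1)$-spaces. The paper's only simplification over your full interpolation is to pick, for each $k$-space $T$, a spanning $k$-tuple $\cP(T)\subseteq J$ and write $f=\sum_{f(T)=1}\prod_{P\in\cP(T)}x_P$ directly, since this product is already the indicator of $T$ among $k$-spaces.
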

\begin{proof}
  By duality, we assume that $k \leq n/2$.
  Put $V = \Ff_q^n$.
  By Lemma \ref{lem:meet_in_spanning}, we can find a set $\cH$ of
  $k^2(n-k+1)$ subspaces of dimension $n-k+1$ such that 
  any $k$-space $T$ contains at least $k$ points in 
  $\bigcup_{H \in \cH} H$ which span $T$. For each $T$, let us 
  denote $k$ such points by $\cP(T)$.
  Then
  \[
    f = \sum_{f(T)=1} \prod_{P \in \cP(T)} x_P. 
  \]
  Hence, we see that $f$ depends on at most 
  $k^2(n-k+1) \cdot \frac{q^{n-k+1}-1}{q-1}$ points.
\end{proof}


\subsection{The Spectra of Johnson and Grassmann Graphs}

Let $n \geq 2k$.
Consider the eigenvalues of the adjacency matrices 
of the Johnson graph $J(n, k)$ and the Grassmann
graph $J_q(n, k)$. These are well-understood objects,
for instance see Chapter 9 in \cite{BCN}.
Both graphs have $k+1$ eigenspaces $V_0, V_1, \ldots, V_{k}$
with a natural ordering: the eigenspace $V_d$ has
dimension $\gauss{n}{d}-\gauss{n}{d-1}$ (where we read $\gauss{n}{d} = \binom{n}{d}$
for the Johnson graph). Corollary 3.2.3 in \cite{Vanhove2011} implies

\begin{Lemma} \label{lem:span_eigenspaces}
Let $n\geq 2k$ and consider $J_q(n,k)$, then we have for every $0 \leq d\leq k$ that
$V_0 + \ldots + V_{d}=\< x_D: \dim D=d \>$.
\end{Lemma}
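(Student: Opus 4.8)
The plan is to realise $\langle x_D : \dim D = d\rangle$ as the image of a $\mathrm{GL}(n,q)$-equivariant incidence map, use multiplicity-freeness to see that it is a sum of eigenspaces, and then pin down which ones by a dimension count. Write $V = \Ff_q^n$ and, for $0\le j\le k$, let $\Phi_j\colon \Rf^{\gauss{V}{j}}\to\Rf^{\gauss{V}{k}}$ be the linear map sending the basis vector of a $j$-space $D$ to $x_D=\sum_{S\supseteq D}e_S$; thus $\im(\Phi_j)=\langle x_D:\dim D=j\rangle$. Since $\mathrm{GL}(n,q)$ permutes both sets of subspaces and $\Phi_j$ intertwines the induced actions, $\im(\Phi_j)$ is a $\mathrm{GL}(n,q)$-submodule of $\Rf^{\gauss{V}{k}}$. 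Because $n\ge 2k$, it is well known that $\Rf^{\gauss{V}{k}}$ is a multiplicity-free $\mathrm{GL}(n,q)$-module whose pairwise non-isomorphic irreducible constituents are exactly the eigenspaces $V_0,\dots,V_k$ of $J_q(n,k)$ (equivalently, the Bose--Mesner algebra of the Grassmann scheme is all of $\mathrm{End}_{\mathrm{GL}(n,q)}(\Rf^{\gauss{V}{k}})$). Hence each $\im(\Phi_j)$ is a direct sum of a subset of $\{V_0,\dots,V_k\}$, which in particular already gives the inclusion ``$\subseteq$'' once we identify the subset.

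Next I would record three elementary facts. First, the images are nested: summing $x_D$ over the $d$-spaces $D$ containing a fixed $(d-1)$-space $D'$ yields $[k-d+1]\,x_{D'}$ (count the $d$-spaces with $D'\subset D\subseteq S$ for a $k$-space $S$), so $\im(\Phi_{d-1})\subseteq\im(\Phi_d)$; and $\im(\Phi_0)=\langle\mathbf 1\rangle=V_0$ because the zero space lies in every $k$-space. Second, $\dim\im(\Phi_d)=\gauss{n}{d}$: this is the classical statement that the incidence matrix of $d$-spaces versus $k$-spaces of $\Ff_q^n$ has full row rank whenever $d\le\min(k,n-k)$, which holds here since $n\ge 2k$; equivalently, $\Phi_d^{\mathsf T}\Phi_d$ lies in the (commutative) Bose--Mesner algebra of $J_q(n,d)$, is positive semidefinite, and has eigenvalues which are nonzero products of $q$-binomial coefficients, so $\Phi_d$ is injective. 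Third, the multiplicities $\dim V_i=\gauss{n}{i}-\gauss{n}{i-1}$ are strictly increasing for $0\le i\le k$ when $n\ge 2k$ (the usual unimodality fact for Grassmann schemes).

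Finally, induct on $d$, the base $d=0$ being the computation above. Assuming $\im(\Phi_{d-1})=V_0\oplus\dots\oplus V_{d-1}$, the subspace $\im(\Phi_d)$ contains it, is a direct sum of eigenspaces, and has dimension $\gauss{n}{d}=\gauss{n}{d-1}+\dim V_d$; writing $\im(\Phi_d)=(V_0\oplus\dots\oplus V_{d-1})\oplus\bigoplus_{i\in J}V_i$ with $\varnothing\ne J\subseteq\{d,\dots,k\}$ and $\sum_{i\in J}\dim V_i=\dim V_d$, strict monotonicity of the $\dim V_i$ forces $J=\{d\}$, so $\im(\Phi_d)=V_0\oplus\dots\oplus V_d$. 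The main obstacle is the second fact: knowing $\dim\im(\Phi_d)$ exactly is what drives the argument, and the full-rank (equivalently, injectivity) statement for the incidence matrix, though classical, is not formal; the multiplicity-freeness of $\Rf^{\gauss{V}{k}}$ is likewise an input, albeit a standard one. If one wishes to bypass these, an alternative is to show directly that the down-map $\nabla\colon\Rf^{\gauss{V}{k}}\to\Rf^{\gauss{V}{d}}$, $e_S\mapsto\sum_{D\subseteq S}e_D$, annihilates $V_j$ for $j>d$ (using $\langle x_D,v\rangle=(\nabla v)(D)$), but this amounts to re-proving the same structural input.
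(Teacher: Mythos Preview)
Your argument is correct. The paper itself does not give a proof of this lemma at all: it simply states that the result follows from Corollary~3.2.3 of Vanhove's thesis and moves on. So your proposal is not competing with an argument in the paper but rather supplying one where the paper only cites.

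Substantively, your route via $\mathrm{GL}(n,q)$-equivariance plus multiplicity-freeness of $\Rf^{\gauss{V}{k}}$, together with Kantor's full-rank theorem for the $d$-versus-$k$ incidence matrix and a dimension count, is a clean and standard way to get the statement. Two small remarks. First, your appeal to ``the usual unimodality fact'' for the strict monotonicity of $\dim V_i=\gauss{n}{i}-\gauss{n}{i-1}$ on $0\le i\le k$ is a slight misnomer: unimodality of the Gaussian sequence is not the same as monotonicity of its first differences. What you actually need (and what is true for $q\ge 2$, $n\ge 2k$, $0\le i\le k-1$) is the convexity inequality $\gauss{n}{i+1}+\gauss{n}{i-1}>2\gauss{n}{i}$, which follows from $[n-i]/[i+1]>2$ since $n-i\ge i+2$ and $q\ge 2$; you might add that one line. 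Second, both of your ``inputs'' --- multiplicity-freeness of the permutation module and full row rank of the incidence matrix --- are exactly the kinds of structural facts that Vanhove's thesis packages, so in spirit your proof and the paper's citation draw on the same well; the difference is that you make the dependence explicit rather than black-boxing it.
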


In particular, the first eigenspace
is spanned by the all-ones vector (or, as we identify vectors
and functions here, $f=1$).
Note that the eigenvalue of $J_q(n, k)$ corresponding to the eigenspace $V_j$ is
\[
  q^{j+1} [k-j][n-k-j] - [j].
\]

Asking for an equitable bipartitions
is the same as asking for a set with characteristic 
function in $V_0 + V_d$ for some $d$, see \cite[\S9.3]{AlgComb}.

\subsection{Some Equivalent Definitions} \label{sec:eqdef}

The next result emphasizes the (well-known) fact that 
there are three ways of looking at degree $d$ functions:
We can see them as degree $d$ polynomials, we can see them
as functions in certain eigenspaces, or we can see them 
as a certain type of functions in posets.
We include some minor variants which we consider useful.
We assume that $d \leq k$.

The {\it $d$-space-to-$k$-space incidence matrix} $A = (a_{ij})$ of $J_q(n, k)$
is the $(\gauss{n}{d} \times \gauss{n}{k})$-matrix indexed
by $d$-spaces and $k$-spaces of $\Ff_q^n$ where $a_{ij} = 1$
if the $i$-th $d$-space lies in the $j$-th $k$-space and $a_{ij} = 0$
otherwise. 

\begin{Proposition}
    Let $n \geq 2k$.
    For $f$ a real function on $J_q(n, k)$ the following are equivalent:
    \begin{enumerate}[(a)]
     \item The function $f$ has degree $d$.
     \item The function $f$ lies in $V_0 + \ldots + V_{d}$.
     \item The function $f$ is orthogonal to $V_{d+1} + \ldots + V_{n}$.
     \item There exists a weighting $\wt: \gauss{V}{d} \rightarrow \Rf$ such that for all $S \in \gauss{V}{k}$ we have
            \[ f(S) = \sum_{D \in \gauss{S}{d}} \wt(D). \]
     \item The function $f$ lies in the image of the $d$-space-to-$k$-space incidence matrix.
    \end{enumerate}
\end{Proposition}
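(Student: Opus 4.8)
The plan is to prove the equivalences by a short cycle together with two "easy" side-implications, using the eigenspace description from Lemma \ref{lem:span_eigenspaces}. Concretely, I would establish $(a) \Leftrightarrow (b)$, $(b) \Leftrightarrow (c)$, $(a) \Leftrightarrow (e)$, and $(d) \Leftrightarrow (e)$, since each of these is either immediate or follows directly from a result already available.

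For $(a) \Leftrightarrow (b)$: by definition $f$ has degree $d$ iff $f \in \< x_T : \dim T = d \>$, and by Lemma \ref{lem:span_eigenspaces} this space is exactly $V_0 + \ldots + V_d$. For $(b) \Leftrightarrow (c)$: the eigenspaces $V_0, \ldots, V_k$ of the (symmetric) adjacency matrix of $J_q(n,k)$ are pairwise orthogonal and span the whole ambient space $\Rf^{\gauss{n}{k}}$, so the orthogonal complement of $V_0 + \ldots + V_d$ is precisely $V_{d+1} + \ldots + V_k$ (one should note $V_{d+1}+\ldots+V_n$ in the statement just means $V_{d+1}+\ldots+V_k$, as there are only $k+1$ eigenspaces). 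For $(a) \Leftrightarrow (e)$: writing $x_D$ for the column of the $d$-space-to-$k$-space incidence matrix $A$ indexed by $D$, we have $(x_D)_S = 1$ iff $D \subseteq S$, which is exactly the indicator $x_D$ as a function on $k$-spaces; hence the image of $A^{\mathsf T}$ (read as a map on the $d$-space coordinates) is $\< x_D : \dim D = d \>$, the degree-$d$ functions — here one has to be slightly careful about the transpose convention, "image of the incidence matrix" meaning the column span when $A$ is viewed as mapping weightings on $d$-spaces to functions on $k$-spaces. For $(d) \Leftrightarrow (e)$: a weighting $\wt$ with $f(S) = \sum_{D \in \gauss{S}{d}} \wt(D)$ for all $S$ is literally the statement $f = \sum_D \wt(D)\, x_D = A^{\mathsf T} \wt$, i.e. $f$ lies in the image of $A^{\mathsf T}$.

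I expect the only genuine subtlety — and thus the main thing to get right rather than the main obstacle — to be bookkeeping about which matrix and which transpose is meant in $(e)$, and the harmless index mismatch $V_{d+1}+\ldots+V_n$ versus $V_{d+1}+\ldots+V_k$; the mathematical content is entirely supplied by Lemma \ref{lem:span_eigenspaces} (for $(a)\Leftrightarrow(b)$) and by the standard spectral decomposition of a distance-regular graph into mutually orthogonal eigenspaces (for $(b)\Leftrightarrow(c)$). All remaining implications are definition-chasing. One can organize the write-up as: first observe $(d)$, $(e)$, and $(a)$ are all three restatements of "$f$ is a linear combination of the vectors $x_D$, $\dim D = d$"; then invoke Lemma \ref{lem:span_eigenspaces} to pass to $(b)$; then use orthogonality of the eigenspaces to pass to $(c)$.
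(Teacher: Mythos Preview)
Your proposal is correct and follows essentially the same route as the paper: use Lemma~\ref{lem:span_eigenspaces} for $(a)\Leftrightarrow(b)$, orthogonality of the eigenspaces of the symmetric adjacency matrices for $(b)\Leftrightarrow(c)$, and observe that $(a)$, $(d)$, $(e)$ are all just the statement $f=\sum_D \wt(D)\,x_D = A^T\wt$. The paper links $(a)\Leftrightarrow(d)$ and $(d)\Leftrightarrow(e)$ rather than your $(a)\Leftrightarrow(e)$ and $(d)\Leftrightarrow(e)$, but this is an immaterial reordering of the same definition-chase; your remarks on the transpose convention and the $V_n$ versus $V_k$ index are also apt.
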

\begin{proof}
Lemma \ref{lem:span_eigenspaces} shows the equivalence
of (a) and (b).
The equivalence of (b) and (c) follows 
from the fact that the common eigenspaces of the 
association scheme $J_q(n, k)$ are pairwise orthogonal
(as its adjacency matrices are symmetric).
Further, (a) and (d) are equivalent: 
If (a) holds, then we can write $f$ as
\[
    f = \sum_{D \in \gauss{V}{d}} c_D x_D.
\]
Take $\wt(D) = c_D$ to obtain (d).
Conversely, if (d) holds, then take $c_D = \wt(D)$
to see that $f$ has degree $d$. Note that  $D \in \gauss{V}{d}$
lies on some $S \in \gauss{V}{k}$.
Let $A$ denote the $d$-space-to-$k$-space incidence matrix.
Then (d) states that $f = A^T \cdot \wt$. 
Here we see $\wt$ as a vector of weights.
Hence, (d) and (e) are equivalent.
\end{proof}


\subsection{Boolean Functions and Designs}
Classical designs live in the Johnson graph.
Let $n \geq 2k \geq 2d \geq 0$.
A (classical) $d$-$(n, k, \lambda)$ design in the Johnson graph 
$J(n, k)$ is a family $\cD$ of $k$-sets such that each $d$-set 
lies in exactly $\lambda$ elements of $\cD$.
A $d$-$(n, k, \lambda)$ design in the Grassmann graph $J_q(n, k)$ is
a family $\cD$ of $k$-spaces such that each $d$-space lies in exactly 
$\lambda$ elements of $\cD$. The existence of these $q$-analogs of 
classical designs was settled (at least in some weak sense)
asymptotically by Fazeli, Lovett, and Vardy in 2014 \cite{FLV2014},
but for small parameters deciding existence is notoriosly hard.
Maybe most prominently, a classical $2$-$(7, 3, 1)$ design 
is well-known as the Fano plane. The existence of 
a $2$-$(7,3,1)$ design in $J_q(7, 3)$, the so-called $q$-analog,
is a long-standing open problem.

Let $\cD$ be a $d$-$(n, k, \lambda)$ design of the Grassmann graph $J_q(n, k)$.
By a standard double counting argument,
$|\cD| = \lambda \gauss{n}{d}/\gauss{k}{d}$.
For any family $\cF$ such that the characteristic function
$f$ of $\cF$ has degree $d$, then $|\cD \cap \cF|$ only depends
on $|\cD|$ and $|\cF|$. Indeed, Boolean degree $d$ functions
are precisely the objects with this property. It is a case
of what Delsarte called {\it design-orthogonality}, see also \cite{Delsarte1977}.

\begin{Corollary}\label{cor:div}
    Let $n \geq 2k$.
    Consider a $d$-$(n, k, \lambda)$ design $\cD$ of $J_q(n, k)$ 
    with characteristic function $g$.
    If $\cF$ is a degree $d$ subset of $J_q(n,k)$,
    then $|\cF \cap \cD| = |\cF| \cdot |\cD|/\gauss{n}{k}$.
    If also $\< g^\gamma: \gamma \in \PGammaL(n, q)\> = V_0 + V_{d+1} + \ldots + V_k$,
    then the converse holds too. 
\end{Corollary}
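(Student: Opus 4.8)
The plan is to prove the two implications of Corollary~\ref{cor:div} separately, using the eigenspace characterization of degree $d$ functions from the preceding Proposition (parts (b) and (c)) together with basic orthogonality in the association scheme $J_q(n,k)$.

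For the forward direction, I would argue as follows. Write the characteristic function $f$ of $\cF$ and the characteristic function $g$ of $\cD$ in terms of the eigenspace decomposition $V_0 + V_1 + \cdots + V_k$. Since $\cD$ is a $d$-design, its characteristic function $g$ has degree $d$ in the ``dual'' sense: in fact the standard fact is that a $d$-design is precisely a set whose characteristic function is orthogonal to $V_1 + \cdots + V_d$ (equivalently, $g \in V_0 + V_{d+1} + \cdots + V_k$). On the other hand, $f$ has degree $d$, so by part (b) of the Proposition $f \in V_0 + V_1 + \cdots + V_d$. Now compute $|\cF \cap \cD| = \langle f, g\rangle$ (standard inner product, counting measure). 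Decompose $f = f_0 + f_{\le d}'$ and $g = g_0 + g_{>d}'$ where $f_0, g_0$ are the projections onto $V_0$ (multiples of the all-ones vector) and the remaining parts lie in $V_1 + \cdots + V_d$ and $V_{d+1} + \cdots + V_k$ respectively. Since these two complementary ranges are orthogonal (distinct eigenspaces of a symmetric operator), $\langle f, g\rangle = \langle f_0, g_0\rangle$. The $V_0$-projection of $f$ is $\frac{|\cF|}{\gauss{n}{k}}\mathbf{1}$ and that of $g$ is $\frac{|\cD|}{\gauss{n}{k}}\mathbf{1}$, so $\langle f, g\rangle = \frac{|\cF|\,|\cD|}{\gauss{n}{k}^2}\cdot \gauss{n}{k} = |\cF|\,|\cD|/\gauss{n}{k}$, as claimed.

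For the converse, suppose $f$ is the characteristic function of a subset $\cF$ such that $|\cF \cap \cD'| = |\cF|\,|\cD'|/\gauss{n}{k}$ holds for \emph{every} $d$-design $\cD'$, and assume the hypothesis $\langle g^\gamma : \gamma \in \PGammaL(n,q)\rangle = V_0 + V_{d+1} + \cdots + V_k$. I want to conclude $f$ has degree $d$, i.e.\ (by part (c) of the Proposition) that $f \perp V_{d+1} + \cdots + V_k$. The intersection condition, rewritten via the $V_0$-projection computation above, says exactly that $\langle f, g^\gamma\rangle$ equals the value it would take if $f$ were replaced by its $V_0$-projection; equivalently $\langle f - f_0, g^\gamma\rangle = 0$ for all $\gamma$, where $f_0$ is the $V_0$-part of $f$. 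Here one uses that $|\cF|$ and $|\cD'| = |\cD|$ are the same for every Galois-conjugate design $g^\gamma$ (the group action preserves cardinalities), so the right-hand side of the intersection identity is constant across $\gamma$. Thus $f - f_0$ is orthogonal to the span $\langle g^\gamma\rangle = V_0 + V_{d+1} + \cdots + V_k$; but $f - f_0 \perp V_0$ automatically, and combined with orthogonality to $V_{d+1} + \cdots + V_k$ this gives $f - f_0 \in V_1 + \cdots + V_d$, hence $f = f_0 + (f - f_0) \in V_0 + V_1 + \cdots + V_d$, so $f$ has degree $d$ by the Proposition.

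The main obstacle is the bookkeeping in the converse: one must be careful that the hypothesis on $\langle g^\gamma\rangle$ is used correctly, namely that the \emph{whole} space $V_0 + V_{d+1} + \cdots + V_k$ (not merely the $V_{d+1} + \cdots + V_k$ part) is spanned by the conjugates, and that orthogonality of $f - f_0$ to all of it — together with the trivial fact $f - f_0 \perp V_0$ — pins $f - f_0$ down to $V_1 + \cdots + V_d$. One should also double-check the direction of the design characterization (that a $d$-design has no weight in degrees $1,\dots,d$ but may have weight in degrees $d{+}1,\dots,k$), since this complementarity between the ``design'' range and the ``degree $d$'' range is exactly what makes the orthogonality argument work. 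Everything else is a short computation with inner products and $V_0$-projections.
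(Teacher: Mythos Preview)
Your argument is correct and is precisely the standard design-orthogonality computation the paper is invoking (the paper does not spell out a proof of Corollary~\ref{cor:div}, but simply appeals to Delsarte's design-orthogonality and the preceding eigenspace Proposition). The forward direction is exactly the inner-product calculation you give, using $g \in V_0 + V_{d+1} + \cdots + V_k$ (which, as you note, follows from $Ag = \lambda\mathbf{1}$ and $\ker A = (V_0+\cdots+V_d)^\perp$), and the converse is the spanning argument you give.

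One small clean-up: in the converse you start by assuming the intersection formula for \emph{every} $d$-design $\cD'$, but your actual argument---and the hypothesis $\langle g^\gamma\rangle = V_0 + V_{d+1} + \cdots + V_k$---only needs it for the conjugates $\cD^\gamma$ of the given design. That is the natural reading of ``the converse'' here, and you in fact prove exactly this, so just state the hypothesis accordingly.
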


Hence, if $\cF$ has degree $d$, then $|\cD| \cdot |\cF|/\gauss{n}{k}$ is an integer.
This is a well-known generalization
of the fact that if $k$ divides $n$, then 
the size of a Boolean degree $1$ function is divisible 
by $\gauss{n-1}{k-1}$ (using $1$-$(n, k, \lambda)$ designs,
that is spreads).
We list the divisibility conditions which derive from the 
known designs in \S\ref{sec:div_cond}.

%
%

\section{Degree 2 in Hypercube and Johnson Scheme} \label{sec:johnson}

Consider the hypercube $\{ 0, 1\}^n$.
Nisan and Szegedy (Theorem \ref{thm:NiS}) showed that a Boolean degree $d$ function
on the hypercube depends on at most $d 2^{d-1}$ variables, so
a Boolean degree $2$ functions depends on at most 4 variables.
Hence, one can obtain a complete list
by considering the first four input variable $x,y,z,w$:
Up to permutation and negation of the input, Boolean degree 2 functions
are 
\begin{align*}
  &0, ~~x, ~~x \text{ AND } y = xy, ~~x \text{ XOR } y = x+y-xy, ~~xy+(1-x)z, \\
  &\text{Ind}(x{=}y{=}z) = xy+xz+yx-x-y-z+1, \\
  & \text{Ind}(x {\leq} y {\leq} z {\leq} w \text{ OR } x {\geq} y {\geq} z {\geq} w).
\end{align*}
Here $\text{Ind}(B)$ is the indicator of $B$.
This list was first obtained by Camion, Carlet, Charpin, 
and Sendrier in \cite{CCCS2001}. For $d=3$
a Boolean degree $2$ function depends on 
at most $10$ variables, see \cite{Zverev2007}.
Note that $3 \cdot 2^2 = 12 > 10$.

\bigskip 

Now consider the Johnson graph $J(n, k)$ with $n \geq 2k$.
For $k=3$ there are countless examples for degree $2$ functions which 
depends on an arbitrary amount of coordinates, see \cite{Filmus2022,FI2019a}
for more details and conjectures.
All equitable bipartitions of degree $2$ 
are classified for $k=3$ \cite{EGGV2022,GG2013}.
For $n$ divisible by $d$ and $k \leq 2d-1$, 
we can find a partition $\cL$ of $\{ 1, \ldots, n \}$ into $d$-sets.
Then
\[
    f(x) = \sum_{ S \in \cL} \prod_{i \in S} x_i
\]
has degree $d$. This function corresponds to the family of $k$-sets containing 
one of the $d$-sets of $\mathcal{L}$. This is a very special case of what Martin 
calls {\it groupwise complete design}, see \cite{Martin1994}.

For $J(8, 4)$ we found an example that depends on $5$
variables. Identify $J(8, 4)$ as a subset of $\{0,1\}^8$
and take all vertices which start with one of
\begin{align*}
  & 11000, 01100, 00110, 00011, 10001,\\
  & 11100, 01110, 00111, 10011, 11001.
\end{align*}
For an alternative description, 
let $Z$ be the cyclic group of order $5$
with its natural action on $\{ 1, \ldots, 5 \}$.
Then we can take any $4$-set which intersects
$\{ 1, \ldots, 5\}$ in one of the orbits $\{1,2\}^Z$
or $\{ 1,2,3\}^Z$.
It is an equitable bipartition with quotient matrix
\begin{align*}
 \begin{pmatrix}
   8 & 8 \\
   6 & 10
 \end{pmatrix}.
\end{align*}

Recall that Boolean degree $2$ function on the hypercube 
depends on at most $4$ coordinates. Thus, the behavior
of the Johnson graph is notably different from the hypercube.

\section{Examples for General Degree}

\subsection{Trivial Examples}

Let $2d \leq 2k \leq n$ and let $\perp$
be some polarity of $\Ff_q^n$.
For a $d$-space $T$, 
let $x_{T,i}$ denote all $k$-spaces $S$
with $\dim(S \cap T) = d-i$,
and $x_{T^\perp,i}$ denote all $k$-spaces $S$
with $\dim(S \cap T^\perp) = d-i$.
We call these examples {\it trivial}. We 
also call all examples trivial which one can 
obtain from these examples by taking unions,
differences, and complements.

For our main interest, $d=2$, there are three
examples to emphasize.

\begin{Example}
    \begin{enumerate}[(a)]
     \item The set of all $k$-spaces through a fixed $2$-space $L$: $x_L = x_P x_Q$.
        Here $P$ and $Q$ are points which span $L$.
     \item The set of all $k$-spaces in a fixed $(n-2)$-space $C$: $x_C = x_H x_K$.
     Here $H$ and $K$ are hyperplanes which intersect in $C$.
     \item The set of all $k$-spaces through a fixed $1$-space $P$ in a fixed $(n-1)$-space $H$:
        $x_P x_H$. Here $P \subseteq H$.
    \end{enumerate}
\end{Example}

Note that the last example is particularly interesting.
Let $C$ be an $(n-2)$-space. Let $\cH$ be the set of $q+1$
hyperplanes through $C$. For each hyperplane $H \in \cH$,
pick a point $P_H \subseteq H$ outside of $C$.
Then
\[
  f = \sum_{H \in \cH} x_H x_{P_H}
\]
is a Boolean degree $2$ function of size $(q+1) \gauss{n-1}{k-1}$.
It is a $2(q+1)$-junta.

%

\subsection{A (Partial) Spread} \label{sec:partial_spread}

Here we describe a union of trivial examples
which we consider noteworthy.
Let $\cS$ be a family of $d$-spaces of $\Ff_q^n$ which are pairwise disjoint.
Such a family is called a {\it partial spread}.
Clearly, $|\cS| \leq [n]/[d]$. In case of equality $\cS$ is called a {\it spread}.
Indeed, spreads exist if and only if $d$ divides $n$, see \cite{Beutelspacher1975}. 
The maximal size of $\cS$ when $d$ does not divide $n$ was determined recently
in \cite{NaS2017}. Clearly,
\[
    f = \sum_{S \in \cS} x_S
\]
is a Boolean degree $d$ function for $k$-spaces if $k \leq 2d-1$.
It shows that any type of Nisan-Szegedy theorem 
(for which we assume $q$ fixed and $n \rightarrow \infty$) needs to exclude 
the case $k \leq 2d-1$.

\subsection{Free Constructions from the Hypercube}

Let $h: \{ 0, 1\}^{m} \rightarrow \{ 0, 1 \}$ be a Boolean degree 
$d$ function on the hypercube. Further, take 
a linear independent set
$B = \{ b_1, \ldots, b_{m} \}$ in $\Ff_q^n$ 
(here $n \geq m$). 
Define a Boolean degree $d$
function $f$ on the subspaces $S$ of $\Ff_q^n$ by putting
\[
  f(S) = h((x_{\<b_i\>}(S))_{i \in \{ 1,\ldots, m\}}).
\]
In \cite{CHS2020}
a Boolean degree $d$ function is described which depends
on $m = \ell(d) \coloneqq 3 \cdot 2^{d-1}-2$ variables. Hence,
a Boolean degree $d$ function on 
$J_q(n, k)$ can depend on $\ell(d)$ variables.
If $B$ is not linear independent,
then $f$ depends on less than $\ell(d)$ variables.

\smallskip 

For $q$ fixed and $d$ sufficiently large,
this is the best construction for low degree Boolean
functions on $J_q(n, k)$ which we are aware of.

For $(n,k)=(8,4)$ we describe a Boolean degree $2$
function in \S\ref{sec:glob_sd} which seems to depend
on more than $C(q^4+q^3+q^2+q+1)$ variables
(we can only show that it depends on at least 
$q^3+q^2+q+1$ variables).

\section{Global Degree 2 Examples from Polar Spaces} 

The most famous example for a nontrivial Boolean degree $1$ function exists 
in $J_q(4, 2)$ (for $q$ odd) and is closely related to the elliptic quadric $O^-(4, q)$.
For degree $2$ we went through all polar spaces in small dimensions.

\subsection{Examples for Planes} \label{sec:glob_pl}

We consider examples on planes, that is $k=3$.

\subsubsection{Symplectic Spaces} \label{sec:symp}
Let $n \geq 6$.
  Consider a (possibly degenerate) symplectic form
  $\sigma$ on $\Ff_q^n$.
  If $n$ is even, then $\sigma(x, y) = x_1 y_2 - x_2 y_1 + \ldots + x_{n-1}y_n - x_n y_{n-1}$
  is a nondegenerate choice for $\sigma$. We say that $x, y$ are orthogonal
  if $\sigma(x, y) = 0$. Let $S$ be a subspace. We write $S^\perp$ for the 
  subspace of vectors orthogonal to $S$. The radical of $S$ is $S \cap S^\perp$.
  We say that $S$ is isotropic if its radical is $S$, and that $S$
  is nondegenerate if its radical is trivial.
  
  \smallskip 
  There are two types of $2$-spaces with respect to $\sigma$:
  Let $\cL_1$ denote the set of isotropic $2$-space,
  and let $\cL_2$ denote the set of nonisotropic $2$-spaces.
  
  \smallskip
  There are also two types of $3$-spaces with respect to $\sigma$:
  Let $\Pi_1$ denote the set of isotropic planes, and
  let $\Pi_2$ denote the set of planes with a point as a radical.
  
  \smallskip 
  
  We claim that $\Pi_i$ has degree $2$ for $i \in \{ 1, 2\}$:
  Put
  \[
   f = \frac{1}{q^2+q+1} \sum_{L \in \cL_1} x_L 
   - \frac{q+1}{q^2(q^2+q+1)} \sum_{L \in \cL_2} x_L.
  \]
  Clearly, $f$ has degree $2$ and corresponds to the set $\Pi_1$. 
  It remains to see that $f$ is Boolean.
  All $q^2+q+1$ lines in an
  isotropic plane $\Pi$ are isotropic, so $f(\Pi)=1$.
  A plane $\Pi$ with a point as radical has $q+1$ isotropic lines
  and $q^2$ nondegenerate lines, so then $f(\Pi)=0$.
  
  \medskip 
  
  Now assume that $n$ is even and that $\sigma$ is nondegenerate.
  
  \smallskip
  
  The symmetry group of this example is $\Sp(n, q)$.
  The number of line orbits equals the number of plane orbits
  as there are precisely two types of each.
  Thus, this is an example for equality in Block's lemma, Lemma \ref{lem:block}.
  The group $\Sp(n, q)$ acts transitive on points, hence the example
  is a $1$-design and therefore an equitable bipartition. For $n=6$, this
  was already observed in \cite{DWM2020}.
  The quotient matrix is
  \begin{align*}
   \begin{pmatrix}
     q[3][n-5] & q^{n-4} [2][3] \\
     [2][n-4] & [3]q[n-3] - [2][n-4]
   \end{pmatrix}.
  \end{align*}
%
%
  
  \begin{Remark}
   A similar construction works for even degree $d$ 
   and $k=d+1$.
  \end{Remark}

\subsubsection{Quadrics} \label{sec:quadrics}

Let $n \geq 6$.
Consider a quadratic form $Q$ on $\Ff_q^n$,
for instance $Q(x) = x_1^2 + x_2x_3 + \ldots +x_{n-1}x_n$
for $n$ odd.
Let $\cQ$ denote the singular points $\<x\>$ (so $Q(x)=0$).
We say that a subspace of $\Ff_q^n$ is totally singular 
if all its points are singular.

Let $\cL_i$ denote the family of lines which intersect $\cQ$
in $i$ points. As $Q$ is quadratic, $\cL_i$ is empty unless 
$i \in \{ 0, 1, 2, q+1 \}$. A line in one of these sets is called exterior line,
tangent, secant, or totally singular line, respectively.
There are five types of planes with respect to $Q$.
In bracket we provide the explicit isomorphy type in $\Ff_q^3$.

\smallskip
\noindent
Let $\Pi_1$ denote the family of totally singular planes (isomorphic to the quadratic form $Q'(x) = 0$).

\smallskip
\noindent
Let $\Pi_2$ denote the family of planes of double line type (isomorphic to $Q'(x)=x_1^2$).

\smallskip
\noindent
Let $\Pi_3$ denote the family of planes with exactly one singular point 
(isomorphic to $Q'(x)=x_1^2 + \alpha x_1 x_2 + \beta x_2^2$ such that 
$x_1^2 + \alpha x_1 x_2 + \beta x_2^2$ is irreducible over $\Ff_q$).

\smallskip
\noindent
Let $\Pi_4$ denote the family of planes with 
exactly two totally singular lines (isomorphic to $Q'(x)=x_1x_2$).

\smallskip
\noindent
Let $\Pi_5$ denote the family of conic planes (isomorphic to $Q'(x) = x_1^2 + x_1x_2$).

\smallskip 

Let $A = (a_{ji})$ denote the $5 \times 4$ matrix such that $a_{ji}$
denotes the number of lines of $\cL_i$ in a plane of $\Pi_j$.
Then 
\[
 A = \begin{pmatrix}
    0 & 0 & 0 & q^2+q+1 \\
    0 & q^2+q & 0 & 1\\
    q^2 & q+1 & 0 & 0\\
    0 & q-1 & q^2 & 2\\
    \binom{q}{2} & q+1 & \binom{q+1}{2} & 0
 \end{pmatrix}.
\]
Then \[ A \left( -\frac{q+1}{q^4+q^3+q^2}, \frac{1}{q^2+q+1}, 
-\frac{q+1}{q^4+q^3+q^2}, \frac{1}{q^2+q+1}\right)^T = ( 1, 1, 0, 0, 0)^T,\]
so $\Pi_1 \cup \Pi_2$ has degree $2$.
We can write the characteristic function of it as
\[
    f_1 = -\frac{q+1}{q^4+q^3+q^2} \sum_{L \in \cL_0 \cup \cL_2} x_L
        + \frac{1}{q^2+q+1} \sum_{L \in \cL_1 \cup \cL_{q+1}} x_L.
\]
If $n$ and $q$ are even, and $Q$ is of hyperbolic type, 
then this example is isomorphic to the symplectic example 
in Section \ref{sec:symp}, but not when $q$ is odd.
Its quotient matrix is identical to the symplectic example.

For $q=2$, we also find the following example which corresponds to $\Pi_1 \cup \Pi_3$:
\[
  f_2 = \frac{15}{64} \sum_{L \in \cL_0} x_L
  -\frac{1}{42} \sum_{L \in \cL_1} x_L
  -\frac{11}{168} \sum_{L \in \cL_2} x_L
  +\frac{1}{7} \sum_{L \in \cL_{q+1}} x_L.
\]
For $(n,q)=(6,2)$ with $Q$ of elliptic type $O^-(6, q)$, $\Pi_1$ is empty. 
Hence, for $q=2$ the sets $\Pi_i= \Pi_1 \cup \Pi_i$ for $i=2,3$ 
have degree $2$, and so any $\Pi_i$ with $i \in \{ 2, \ldots, 4 \}$ 
has degree 2.

\subsection{Examples for Solids} \label{sec:glob_sd}

We consider examples on solids, that is $k=4$.
Let $n=8$.
Let $Q$ be a nondegenerate quadratic form of elliptic type $O^-(8, q)$,
for instance $Q(x) = x_1^2 + \alpha x_1 x_2 + \beta x_2^2 
+ x_3^2 + \ldots + x_8^2$ such that $x_1^2 + \alpha x_1 x_2 + \beta x_2^2$
is irreducible over $\Ff_q$.
The terminology is identical to \S\ref{sec:quadrics},
so $\cQ$ is the set of singular points and we partition 
the lines set into $\cL_0 \cup \cL_1 \cup \cL_2 \cup \cL_{q+1}$.
There are the following types of solids.
In bracket we provide the explicit isomorphy type in $\Ff_q^4$.

\smallskip 
\noindent
Let $\cS_1$ denote the set of all solids of double plane type 
(with a quadratic form of type $Q'(x) = x_1^2$).

\smallskip 
\noindent
Let $\cS_2$ denote the set of all solids with two totally singular planes 
(type $Q'(x) = x_1x_2$).

\smallskip 
\noindent
Let $\cS_3$ denote the set of all solids with with precisely one totally singular line 
(type $Q'(x) = x_1^2 + \alpha x_1x_2 + \beta x_2^2$).

\smallskip 
\noindent
Let $\cS_4$ denote the set of all solids that intersect $\cQ$ in a cone
with a point as base over a conic (type $Q'(x) = x_1^2 + x_1x_2$).

\smallskip 
\noindent
Let $\cS_5$ denote the set of all nondegenerate solids of hyperbolic type $O^+(4, q)$
(type $Q'(x) = x_1x_2 + x_3x_4$).

\smallskip 
\noindent
Let $\cS_6$ denote the set of all nondegenerate solids of elliptic type $O^-(4, q)$
(type $Q'(x) = x_1^2 + \alpha x_1 x_2 + \beta x_2^2 + x_3^2$).

\smallskip 

The example below can be seen as a generalization of the example by 
Bruen and Drudge for degree $1$ in \cite{BD1999}.
Let $A = (a_{ji})$ denote the $6 \times 4$ matrix such that $a_{ji}$
denotes the number of lines of $\cL_i$ in a solid of $\Pi_j$.
\begin{align*}
A=	\begin{pmatrix}
		0 & q^2 (q^2+q+1) & 0 & q^2+q+1 \\
		0 & q(q^2-1) & q^4 & 2q^2+2q+1 \\
		q^4 & q(q+1)^2 & 0 & 1\\
		\frac12 q^3(q-1) & q^3+2q^2 & \frac12 q^3(q+1) & q+1 \\
		\frac12 q^2(q-1)^2 & (q+1)(q^2-1) & \frac12 q^2(q+1)^2 & 2(q+1) \\
		\frac12 q^2(q^2+1) & (q+1)(q^2+1) & \frac12 q^2(q^2+1) & 0
	\end{pmatrix}.
\end{align*}
Then we see that
\[
  A \left( \frac{q+1}{q^3(q^2+q+1)}, 0, - \frac{q+1}{q^3(q^2+q+1)}, \frac{1}{q^2+q+1} \right)^T
  = (1,1,1,0,0,0)^T.
\]
Hence, $\cS_1 \cup \cS_2 \cup \cS_3$ is a degree $2$ set. 
The corresponding degree $2$ polynomial is
\[
 f = \frac{q+1}{q^3[3]} \left((\sum_{L \in \cL_0} x_L) - (\sum_{L \in \cL_2} x_L)\right)
  + \frac{1}{[3]} \sum_{L \in \cL_{q+1}} x_L.
\]
Note that $|\cS_1  \cup \cS_2 \cup \cS_3| = (q^4+1)(q^3+1)(q^2+1) [5]$.

\smallskip 
This example for $J_q(8,4)$ seems to depend on almost all coordinates
which is in contrast to $J(8,4)$ where we only obtained an example
depending on $5$ coordinates, see Section \ref{sec:johnson}. 
Formally, we can show the following:

\begin{Proposition}\label{prop:rel_vars}
  The example $\cS_1  \cup \cS_2 \cup \cS_3$ depends on at least $q^4-q^3+q^2-q+3$ variables
  of type $x_P$ and $x_H$ for $P$ a $1$-space and $H$ an $(n-1)$-space.
\end{Proposition}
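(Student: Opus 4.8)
The plan is to exhibit, for each relevant variable $x_R$, a pair of $k$-spaces $S, S'$ with $S$ in the family $\cF \coloneqq \cS_1 \cup \cS_2 \cup \cS_3$ and $S'$ not in $\cF$, such that $S$ and $S'$ differ only in whether they contain $R$; concretely, it suffices to find for each such $R$ two solids whose symmetric difference (as point sets, or rather as incidence with $R$) is detected by $x_R$ alone while all other $x_P$, $x_H$ agree, which forces the polynomial representation of $f$ to have a nonzero coefficient "feeling" $R$. A cleaner route, and the one I would actually carry out, is the contrapositive: a Boolean degree $2$ function $f$ does \emph{not} depend on $x_R$ precisely when $f$ is constant on the fibre of possibilities at $R$, i.e.\ when swapping incidence with $R$ never changes membership in $\cF$; so I must produce, for at least $q^4 - q^3 + q^2 - q + 3$ choices of $R$, a witness solid pair showing non-constancy. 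The count $q^4 - q^3 + q^2 - q + 3$ strongly suggests the relevant points are (most of) the $q^4 + 1$ points on an elliptic quadric $O^-(4,q)$ lying in a fixed tangent hyperplane, or the singular points on a suitable section, together with two or three extra "special" variables; I would first pin down exactly which geometric objects these $q^4 - q^3 + q^2 - q + 3$ variables are by examining the orbits of $O^-(8,q)$ on points and hyperplanes and checking which of them $f$ can possibly ignore.

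The key steps, in order, are: (1) Using that $f = \frac{q+1}{q^3[3]}(\sum_{\cL_0} x_L - \sum_{\cL_2} x_L) + \frac{1}{[3]}\sum_{\cL_{q+1}} x_L$ is $O^-(8,q)$-invariant, reduce the question to orbit representatives: $f$ depends on $x_R$ iff $f$ depends on $x_{R'}$ for every $R'$ in the $O^-(8,q)$-orbit of $R$, so it is enough to test one point from each point-orbit and one hyperplane from each hyperplane-orbit, and then multiply by orbit sizes. (2) For a candidate point $P$ (resp.\ hyperplane $H$), exhibit two solids $S \supseteq P$, $S' \not\supseteq P$ (obtained from $S$ by a small perturbation fixing the quadratic type of all "$x_P$-irrelevant" structure) with $f(S) \neq f(S')$; this is a finite case analysis over the six solid types $\cS_1, \ldots, \cS_6$ and amounts to checking that the cone-counts in the displayed matrix $A$ genuinely change under the perturbation. (3) Conversely, identify the handful of orbits on which $f$ is provably constant (these are the variables $f$ does \emph{not} depend on), and verify the arithmetic: (number of relevant point-orbits)$\times$(their sizes) $+$ (number of relevant hyperplane-orbits)$\times$(their sizes) $\geq q^4 - q^3 + q^2 - q + 3$. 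Since the statement only claims a lower bound, I can afford to be lossy in step (3) and simply point to one large orbit (of size $\approx q^4$) plus a constant number of small ones.

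The main obstacle I expect is step (2): showing that $f$ is genuinely non-constant at a given point $P$ requires constructing an explicit solid pair differing only in incidence with $P$, and the change in $f$-value is governed by how the line-type distribution (the relevant row of $A$) shifts — this shift must be shown to be nonzero, which is a genuine (if routine) computation inside $\Ff_q^8$ with the elliptic form, and one has to be careful that the perturbation does not inadvertently change incidence with some \emph{other} point or hyperplane that $f$ also depends on (in which case the witness proves nothing). A safe way to handle this is to choose $S$ and $S'$ inside a common $5$-space (coline) so that the only point of $\gauss{V}{1}$ whose incidence changes is $P$ itself, and similarly to arrange a common structure so only one hyperplane's incidence flips; verifying that such local moves exist and compute correctly is where essentially all the work lies. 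I would also double-check the boundary behaviour for small $q$ (the constant $+3$ and the signs of lower-order terms in $q^4 - q^3 + q^2 - q + 3$ suggest the bound was obtained by exactly this orbit-counting and is tight only up to the $O(1)$ correction).
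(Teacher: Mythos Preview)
Your plan has a genuine conceptual gap in step (2). On the Grassmann graph the coordinates $x_P$ (and $x_H$) are \emph{not} independent Boolean inputs: two distinct $4$-spaces of $\Ff_q^8$ necessarily differ in incidence with at least $2q^3$ points (and with many hyperplanes), so there is no pair $S,S'$ that ``differs only in incidence with $P$''. Your suggested fix --- placing $S$ and $S'$ in a common $5$-space --- does not help, since $S$ and $S'$ then meet in a $3$-space and still differ on $2q^3$ points. Consequently, exhibiting a witness pair with $f(S)\neq f(S')$ does not force any particular $x_P$ to appear in every polynomial representation of $f$; the hypercube notion of ``relevant variable'' simply does not transfer, and your orbit-counting programme cannot get off the ground. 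Your guess that $q^4-q^3+q^2-q+3$ is (approximately) an orbit size is also off the mark.

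The paper's argument is entirely different and much shorter: it is a covering bound. One shows that any single point or hyperplane of $\Ff_q^8$ is incident with at most $\alpha=(q^3+1)[4]^2$ elements of $\cS$ (computed by averaging: a singular point lies in $(q+1)(q^2+1)^2(q^3+1)$ elements of $\cS$, and then the non-singular count $\alpha$ follows; the hyperplane case is dual). If $f$ were determined by a set $\cR$ of points and hyperplanes with $|\cR|<\lceil |\cS|/\alpha\rceil = q^4-q^3+q^2-q+3$, then the elements of $\cR$ could not cover all of $\cS$, so there would exist $S\in\cS$ and $T\notin\cS$ both non-incident with every member of $\cR$; these would be indistinguishable from $\cR$ alone, a contradiction. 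No witness pairs, no orbit analysis, no perturbation argument is needed --- the number in the statement is literally $\lceil |\cS|/\alpha\rceil$.
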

\begin{proof}
  Put $\cS = \cS_1 \cup \cS_2 \cup \cS_3$.
  On average, a point of $\Ff_q^8$ lies in $(q^3+1)(q^2+1)[5]$
  elements of $\cS$. A singular point lies in 
  $(q+1)(q^2+1)^2(q^3+1)$ elements of $\cS$.
  Hence, a non-singular point lies on 
  \begin{align*}
    &\frac{[8]\cdot (q^3+1)(q^2+1)[5] - (q^4+1)(q^2+q+1)\cdot (q+1)(q^2+1)^2(q^3+1) }{[8] - (q^4+1)(q^2+q+1)}\\
    &= (q^3+1)[4]^2 =: \alpha
  \end{align*}
  elements of $\cS$.
  Dually, a hyperplane of $\Ff_q^8$
  contains at most $\alpha$ elements of $\cS$.
  Hence, we need at least 
  \[
    \left\lceil \frac{|\cS|}{\alpha} \right\rceil = q^4-q^3+q^2-q+3
  \]
  points and hyperplanes to cover all elements of $\cS$.
  Now suppose that $\cS$ only depends on a set $\cR$ of less than $q^4-q^3+q^2-q+3$
  points and hyperplanes. 
  Then there exist $4$-spaces 
  $S \in \cS$ and $T \not\in \cS$ which are non-incident
  with all elements of $\cR$. Hence, we cannot distinguish 
  between $S$ and $T$ based on $\cR$ which contradicts
  that $\cS$ only depends on $\cR$.
\end{proof}

By Lemma \ref{lem:up_bnd}, the preceding result 
is tight up to a constant factor (as $q \rightarrow \infty$).
Hence, there exists a degree $2$ function on $J_q(8, 4)$
which depends, up to a constant factor, the maximum number
of coordinates.

\section{Other Examples}

\subsection{Local Degree 2 Examples for Planes}

Here we provide an (incomplete) selection of 
examples which are of degree $2$ and which stabilize
a partial flag of subspaces, but are not trivial.

\subsubsection{A Line and a Complementary Spread}

For $n=6$, let $L$ be a line and let $\cC$ be a set of $q^2+1$
colines through $L$ which pairwise meet in $L$. Note that $\cC$
exists because $\Ff_q^4$ possesses line spreads.
Then the set
$\{ \Pi \text{ a plane}: \dim(\Pi \cap L) = 0 \text{ and } 
\exists C \in \cC: \Pi \subseteq C \}$ has degree $2$
and size $(q^2+1) \cdot q^2(q+1)$.
We can write its characteristic function $f$ as
\[ f=\sum_{C \in \cC} (x_{C}-x_L). \]

The example is a $(q+1)(q^2+2)$-junta: 
we can decide if an element is in the set by testing inclusion 
for each of the $q+1$ hyperplanes through the $q^2+1$ 
colines through $L$, together with testing the inclusion 
for each of the $q+1$ points of $L$.

\subsubsection{Incident Point-Plane-Hyperplane} \label{sec:S82}

\begin{figure}
 \centering
 \includegraphics[scale=0.26]{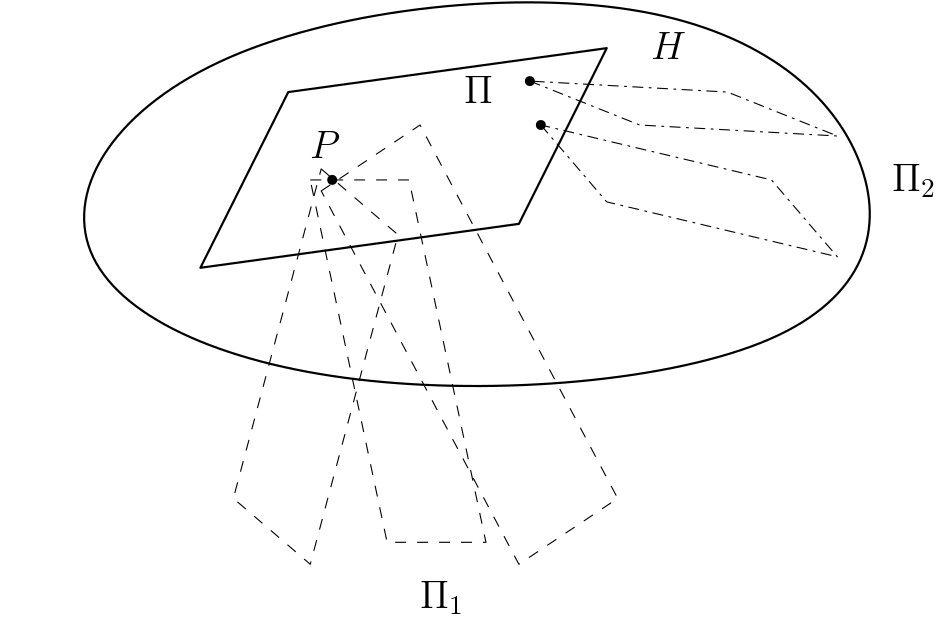}
 \caption{The point-plane-hyperplane example from \S\ref{sec:S82}.
 The planes of $\Pi_1$ and $\Pi_2$ correspond to the planes with dashed border.}
 \label{fig:S82ex}
\end{figure}

Let $n=6$. Pick a point $P$, a plane $\Pi$, and a hyperplane $H$
such that $P \subseteq \Pi \subseteq H$.
 
 \smallskip
 
 \noindent
 Let $\Pi_1$ be the set of all planes not in $H$ which meet $\Pi$
 in a line through $P$.

 \smallskip
 
 \noindent
 Let $\Pi_2$ the set of all planes in $H$ whose meet with $\Pi$ 
 is a point different from $P$.
 
 \smallskip
 
Then $\Pi_1 \cup \Pi_2$ corresponds to a degree $2$ function
and has size $q^3(q+1) + (q^2+q)q^4 = (q^2+1) \cdot q^3 (q+1)$.
To see that it has degree 2, consider the types of lines:

\smallskip
\noindent
Let $\cL_1$ be the set of all lines in $\Pi$ through $P$.

\smallskip
\noindent
Let $\cL_2$ be the set of all lines in $\Pi$ not through $P$.

\smallskip
\noindent
Let $\cL_3$ be the set of all lines in $H$ whose meet with $\Pi$ is $P$.

\smallskip
\noindent
Let $\cL_4$ be the set of all lines in $H$ whose meet with $\Pi$ is a point, but not $P$.

\smallskip
\noindent
Let $\cL_5$ be the set of all lines in $H$ which are skew to $\Pi$.

\smallskip
\noindent
Let $\cL_6$ be the set of all lines whose meet with $H$ is $P$.

\smallskip
\noindent
Let $\cL_7$ be the set of all lines whose meet with $H$ is a point in $\Pi$, but not $P$.

\smallskip
\noindent
Let $\cL_8$ be the set of all lines whose meet with $H$ is a point not in $\Pi$.

\smallskip

Then we can write the characteristic function $f$ of $\Pi_1 \cup \Pi_2$ as
\begin{align*}
f = &\frac{q^3}{[3][2]} \sum_{L \in \cL_1} x_L
+ \frac{-q}{[3]} \sum_{\cL_2 \cup \cL_3} x_L\\
&+ \frac{1}{[3][2]} \sum_{\cL_4 \cup \cL_7} x_L
+ \frac{q+1}{q[3]} \sum_{\cL_5 \cup \cL_6} x_L
+ \frac{-1}{q^2[3]} \sum_{\cL_8} x_L.
\end{align*}
See Figure \ref{fig:S82ex} for an illustration.
The example is a $(q^2+q+2)$-junta: we can decide
if an element is in $\Pi_1 \cup \Pi_2$ by testing
inclusion for the $q^2+q+1$ points in $\Pi$ and $H$.

\subsection{Some Sporadic Examples}

Here are some sporadic example for $(n,q) = (6,2)$.
Despite our best efforts, we did not manage to generalize them.
The reader can find some more sporadic examples, which are also 
equitable bipartitions, in \cite{Mogilnykh2020}.

\subsubsection{Incident Line-Solid}

For $n=6$ and $q=2$, let $M$ be a line and let $C$ be a coline with $M \subseteq C$.
There are $q+1$ hyperplanes through $C$ and $q+1$ points on $M$.
For each of the $q+1$ points $P \subseteq M$, choose a 
distinct hyperplane $H_P$ through $C$.

\smallskip
\noindent
Let $\Pi_1$ denote the set of all planes $\pi$ with $M \subseteq \pi \subseteq C$.

\smallskip
\noindent
Let $\Pi_2$ denote the set of all planes $\pi$ not in $C$ 
which meet $M$ in some point $P$ and satisfy $\pi \subseteq H_p$.

\smallskip
\noindent
Let $\Pi_3$ denote the set of all planes which meet $M$ in some point
 $p$ and and $C$ in a line. Note that $\Pi_2 \subseteq \Pi_3$.
 
 \smallskip
 
The set $\Pi_1 \cup \Pi_3$ has degree $2$ and size 
$(q+1) + q^3(q+1)^2 + q^3(q^2+1)(q+1)$.
It is a $2(q+1)$-junta: test all points on $L$ and all hyperplanes through $C$.

Then set $\Pi_1 \cup \Pi_2$ has degree $2$ and size $(q+1) + q^3(q+1)^2$.
The example is a $2(q+1)$-junta (as before). Its characteristic function can be written as
\[ f = \sum_{P \in \gauss{M}{1}} (x_P - x_P x_{H_P}),\]
where $f$ is the characteristic function of $\Pi_1 \cup \Pi_2 \cup \Pi_3$ and also has degree $2$.

\subsubsection{Incident Point-Line-Plane-Hyperplane}

For $n=6$ and $q=2$, let $P$ be a $1$-space,
$M$ a $2$-space, $\Pi$ a $3$-space, and 
$H$ a $5$-space such that 
$P \subseteq M \subseteq \Pi \subseteq H$.

\smallskip
\noindent
Let $\Pi_1$ denote all planes in $H$ which contain $M$.

\smallskip
\noindent
Let $\Pi_2$ denote all planes not in $H$ 
which meet $\Pi$ in a line through $P$ different from $M$.

\smallskip
\noindent
Let $\Pi_3$ denote all planes in $H$ which meet $\Pi$
in a point on $M$ different from $P$.

\smallskip 
The set $\Pi_1 \cup \Pi_2 \cup \Pi_3$ has degree $2$ and 
size $7+16+32 = 55$.
The example is a $(3q+1)$-junta, that is a $7$-junta:
we can decide
if an element is in $\Pi_1 \cup \Pi_2$ by testing 
inclusion for $H$ and the points in a triangle which includes $M$.

\subsection{Unexplained Computer Examples}

We found some examples by computer which we could not derive 
from any of the other examples. We present these and their 
symmetries in the following table. 
In the structure description, we denote the the cyclic group of 
order $m$ by $C_m$, the symmetric group of order $m!$ by $S_m$,
and the dihedral group of order $m$ by $D_m$.
We write $a^b$ if an orbit of length $a$ occurs $b$ times.
For plane orbits, we only provide those that constitute the degree $2$ example.

{\bigskip\footnotesize\noindent
\setlength{\tabcolsep}{3pt}
\begin{tabular}{c|cllll}
 size & stab size & point orbits & line orbits & plane orbits & structure\\
\hline
 80 & 61440
    & $32, 20, 10, 1$ 
    &$320, 160, 60,  40^2, 16, 10, 5$ 
    & $40^2$ 
    & $C_2^5 \rtimes (  C_2^4 \rtimes S_5 )$ \\  
 85 & 86016
    &$ 32, 14^2, 2, 1$ 
    &$224^2,  84, 32, 28^2, 16, 7^2, 1$ 
    & $56, 21, 8$ 
    & $G$, see below\\
177 & 64
    & $32, 4^7, 2^4, 1^7$ 
    &$32^8, 16^{15}, 8^8, 4^8, 2^{24}, 1^{11}$ 
    & $16^4, 8^{10}, 2^{13}, 1^7$ 
    & $C_2^3 \times D_8$ \\
420 & 126
    & $42, 21$ 
    & $126^4, 63, 42, 21, 14, 7$ 
    & $126^3,42$
    & $S_3 \times ( C_7 \rtimes C_3)$\\
\end{tabular}

\smallskip \noindent
Here $G = (C_4{\times} C_2^3) {\rtimes} ( C_2 {\times} ( C_2^3 {\rtimes} PSL(3,2)))$.
\par}

\section{Concluding Remarks}

(1) In case of the Johnson graph, 
let us remark that a classification of Boolean degree $2$ functions in $J(n, k)$ appears feasible,
but goes beyond the scope of the present work.

\medskip 
\noindent
(2) For $d=1$ and $k=2$, Bamberg and Penttila \cite{BP2008} classified all 
subgroups of $\PGammaL(n, q)$ which have the same number of orbits on points and lines.
This answered a conjecture by Cameron and Liebler in \cite{CL1982}.
In light of \S\ref{sec:block}, the following generalization is natural:
\begin{Problem}
  Classify all subgroups of $\PGammaL(n, q)$ with the same number of orbits
  on $d$-spaces and $k$-spaces, that is all Boolean degree $d$ functions
  on $J_q(n, k)$ for which equality holds in Lemma \ref{lem:block}.
\end{Problem}

\medskip 
\noindent
(3) In \cite{DFLLY2020} it was shown for several domains that one can 
write a Boolean degree $d$ function as a constant depth decision tree.
In case of the Grassmann graph, the natural queries are 
``Is a point contained in a subspace?'' and 
``Is a subspace in a hyperplane?''. 
In light of our collection of examples, one is tempted to make the following conjecture:
\begin{Conjecture}\label{conj:filmus}
    For every given $q$ and $d$, there exists a $k_0$ such that if $k, n-k \geq k_0$,
    then every Boolean degree $d$ function on $J_q(n, k)$ is a constant depth decision tree.
\end{Conjecture}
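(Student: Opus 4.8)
\section*{Proof proposal for Conjecture \ref{conj:filmus}}

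The plan is to reduce the conjecture to a \emph{junta theorem} for the Grassmann graph --- the analog of Theorem \ref{thm:NiS} and Theorem \ref{thm:NiS_slice} --- and then build the tree essentially for free. The key observation is that, in the stated query model (``is point $P$ in $S$?'' and ``is $S$ in hyperplane $H$?''), a function depending on only $m$ points and hyperplanes is automatically a depth-$m$ decision tree: one queries each relevant coordinate $x_P$ and $x_H$ in turn, and every leaf is labelled by the constant value $f$ takes on the corresponding incidence pattern. Conversely a depth-$t$ tree reads at most $t$ coordinates on each branch, hence depends on at most $2^t-1$ of them. Thus Conjecture \ref{conj:filmus} is equivalent to the statement that for every $q$ and $d$ there is a $k_0$ and a constant $C = C(q,d)$ so that whenever $k, n-k \geq k_0$, every Boolean degree $d$ function on $J_q(n,k)$ is a $C$-junta. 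This reduction is the easy half and isolates where the real work lies.

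For the junta theorem I would try two complementary attacks. The first is an inductive/restriction approach enabled by the degree-preservation lemma of \S\ref{sec:notation_BFA}: restricting $f$ to the $k$-spaces inside a hyperplane $H$ yields a degree $d$ function on $J_q(n-1,k)$, and passing to the quotient by a point $P$ yields a degree $d$ function on $J_q(n-1,k-1)$. Iterating these operations reduces the problem, for fixed $d$, to a bounded window of small base dimensions $(n,k)$ --- precisely the role the authors assign to their study of small cases. The obstruction to pure induction is \emph{gluing}: knowing that each hyperplane-restriction is a bounded junta does not a priori bound the global junta size uniformly in $n$, since the relevant coordinates could drift from one restriction to another. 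One would need a consistency argument showing that the relevant coordinates of overlapping restrictions agree, so that they assemble into a single bounded set $J$ of points and hyperplanes.

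The second, and I think more promising, attack is analytic, following the template that produced Theorem \ref{thm:NiS_slice} and the results of \cite{DFLLY2020}. Here one works in the eigenspace decomposition $V_0 + \ldots + V_d$ (the characterization in \S\ref{sec:eqdef}) and invokes the recently developed \emph{global hypercontractive inequalities} on the Grassmann scheme. The skeleton is: (i) bounded degree together with Booleanity forces the total influence to be $O_{q,d}(1)$; (ii) global hypercontractivity upgrades ``bounded total influence'' to ``a bounded number of coordinates carry almost all of the influence''; (iii) a restriction and bootstrapping step shows that after fixing these $O_{q,d}(1)$ influential coordinates the residual function is a genuine constant, so $f$ is \emph{exactly} a junta rather than merely close to one. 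The hypotheses $k, n-k \geq k_0$ are exactly what is needed both for the hypercontractive estimates to apply and to exclude the spread-type degeneracies of \S\ref{sec:partial_spread} with $k \leq 2d-1$, which force $k_0 \geq 2d$.

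The main obstacle is unambiguously the passage (ii)--(iii) from an approximate junta to an exact one on the Grassmann graph. On the hypercube and the slice this step rests on sharp hypercontractivity together with a rigidity principle: a Boolean low-degree function cannot be close to a junta without equalling it. The Grassmann analog of this rigidity statement --- equivalently, a clean junta theorem valid for all large $k, n-k$ --- is exactly what is not yet known, which is why the statement remains a conjecture. A successful proof would most plausibly combine the global hypercontractivity framework with the explicit examples catalogued here to pin down $k_0$, for which the present evidence, together with the hypercube construction of \S\ref{sec:johnson} and \cite{CHS2020}, suggests a value of order $2^d$.
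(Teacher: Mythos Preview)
The paper does not prove this statement; it is listed explicitly as a conjecture, so there is no ``paper's own proof'' to compare against. What you have written is not a proof either, and you are candid about this in your final paragraph. Your reduction in the first paragraph is correct and worth recording: in the stated query model, ``constant-depth decision tree'' and ``$C(q,d)$-junta'' are equivalent up to the trade-off depth $\leq$ junta-size $\leq 2^{\text{depth}}-1$, so Conjecture~\ref{conj:filmus} is indeed equivalent to a Nisan--Szegedy-type junta theorem for $J_q(n,k)$. That is the right framing.

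Where your proposal remains a programme rather than a proof is exactly where you say: the passage from ``approximate junta'' to ``exact junta'' (your step (iii)). On the hypercube and the slice this rigidity comes essentially for free from random restriction, but on the Grassmann graph no such argument is known, and the paper's own evidence cuts against any na\"ive transfer. In particular, Proposition~\ref{prop:rel_vars} exhibits a Boolean degree~$2$ function on $J_q(8,4)$ that provably depends on $\Theta(q^4)$ coordinates, essentially saturating the generic upper bound of Lemma~\ref{lem:up_bnd}. This does not contradict the conjecture (the constant may depend on $q$, and $k_0$ may exceed $4$), but it shows that any inductive or hypercontractive argument must genuinely use the hypothesis $k,\,n-k \geq k_0$ to rule out polar-space-type constructions, not merely the spread obstruction of \S\ref{sec:partial_spread} at $k \leq 2d-1$. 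Your outline does not indicate how either approach would do this, and that is the gap. The gluing problem you flag for the inductive route is real and, as far as is known, unresolved; the analytic route via global hypercontractivity is plausible but the required exact-junta step has not been carried out for the Grassmann scheme.
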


One might also conjecture that the depth only depends polynomially on $q$.
A related, but less specific conjecture can be found in \cite{FI2019a}.

\bigskip
\paragraph*{Acknowledgment} We thank Sam Adriaensen, John Bamberg, 
and Alexander L. Gavrilyuk for very helpful comments.
We thank Yuval Filmus for several suggestions,
including the list in \S\ref{sec:johnson} and Conjecture \ref{conj:filmus}.
We thank Yuriy Tarannikov 
for informing us about \cite{CCCS2001} and 
\cite{Zverev2007}.
We thank the referee for their careful reading of the document.
The second and third authors are each supported by a 
postdoctoral fellowship of the Research Foundation -- Flanders (FWO).

\appendix

\section{Permutation Groups and Block's Lemma} \label{sec:block}

In 1982 Cameron and Liebler investigated subgroups of $\PGammaL(n, q)$
and their orbits on points and lines of the vector space of the vector space $\Ff_q^{n}$, see \cite{CL1982}.  
An application of Block's lemma \cite{Block1967} shows that any subgroup of $\PGammaL(n, q)$ 
has at least as many orbits on lines as it has on points. In case of equality the orbits of lines have, in 
our terminology here, degree $1$.

In this section we concisely discuss a generalization of this application of
Block's lemma to degree $d$, i.e. we show that a subgroup of $\PGammaL(n, q)$ 
has at least as many orbits on $k$-spaces of $\Ff_q^{n}$ as on $d$-spaces if $d \leq k \leq n/2$.
Again, if equality occurs, then the orbits on $k$-spaces have degree $d$ in the Grassmann 
graph $J_q(n, k)$.

\begin{Lemma}[\cite{Block1967}]\label{lem:block}
Let $G$ be a group acting on two finite sets $X$ and $X'$, with respective sizes $n$ and $m$.
Let $O_1,\ldots,O_s$, respectively $O'_1,\ldots,O'_t$ be the orbits of the action on $X$, respectively $X'$.
Suppose that $R \subseteq X \times X'$ is a $G$-invariant relation and call $A = (a_{ij})$ the $n \times m$ matrix of this relation, i.e. $a_{ij}=1$ if and only if $x_i R x'_j$ and $a_{ij} = 0$ otherwise, after having ordered the 
elements of $X$ and $X'$ arbitrarily. Let $\chi_S$ denote the characteristic vector of a set $S$.
\begin{itemize}
\item[(i)] The vectors $A^T \chi_{O_i}$, $i=1, \ldots, s$, are linear combinations of the vectors $\chi_{O'_j}$.
\item[(ii)] If $A$ has full row rank, then $s \leq t$. If $s = t$, then all vectors $\chi_{O'_j}$ are linear combinations
of the vectors $A^T \chi_{O_i}$, hence $\chi_{O'_j} \in \im(A^T)$.
\end{itemize}
\end{Lemma}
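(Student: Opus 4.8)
The statement is the classical Block's lemma phrased as a linear-algebra fact about $G$-invariant relations, so the proof should be essentially a matter of bookkeeping with characteristic vectors and averaging. First I would fix notation: let $A$ be the $n \times m$ relation matrix, and for a subset $S \subseteq X$ (resp.\ $S' \subseteq X'$) write $\chi_S$ for its characteristic vector. The starting observation is that $G$-invariance of $R$ means that for every $g \in G$ the permutation matrices $P_g$ on $X$ and $Q_g$ on $X'$ satisfy $P_g A Q_g^{-1} = A$, equivalently $A^T P_g = Q_g A^T$. Since $\chi_{O_i}$ is fixed by every $P_g$ (as $O_i$ is a $G$-orbit), the vector $A^T \chi_{O_i}$ is fixed by every $Q_g$; but a vector in $\Rf^{X'}$ fixed by the whole permutation action of $G$ on $X'$ is constant on each orbit $O'_j$, i.e.\ lies in $\< \chi_{O'_1}, \ldots, \chi_{O'_t}\>$. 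This proves (i).

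For (ii), the point is a dimension count. The vectors $\chi_{O_1}, \ldots, \chi_{O_s}$ are linearly independent (they have disjoint supports), so the map $v \mapsto A^T v$ restricted to their span $U = \<\chi_{O_i}\>$ has image of dimension $s - \dim(U \cap \ker A^T)$. If $A$ has full row rank then $\ker A^T = 0$, so $\dim A^T U = s$; by (i) this image sits inside the $t$-dimensional space $\<\chi_{O'_j}\>$, giving $s \leq t$. If moreover $s = t$, then $A^T U$ is a full-dimensional subspace of $\<\chi_{O'_j}\>$, hence equals it, so every $\chi_{O'_j}$ is a linear combination of the $A^T\chi_{O_i}$ and in particular lies in $\im(A^T)$.

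The only mildly delicate point — and the one I would state carefully rather than wave at — is the claim that a real vector on $X'$ invariant under the permutation action of $G$ is constant on orbits. This is immediate: if $w \in \Rf^{X'}$ satisfies $Q_g w = w$ for all $g$ and $x', y'$ lie in the same orbit with $y' = g\cdot x'$, then $w(y') = (Q_g w)(y') = w(x')$. So there is no real obstacle here; the proof is short, and the main thing to get right is simply the translation between the $G$-invariance of $R$ and the matrix identity $A^T P_g = Q_g A^T$, together with keeping the transposes straight. I would present it in the three bullet-sized steps above: (1) invariance gives $A^T\chi_{O_i}$ is $G$-fixed on $X'$ hence orbit-constant; (2) disjoint supports give independence and the dimension inequality under full row rank; (3) equality forces the image to fill $\<\chi_{O'_j}\>$.
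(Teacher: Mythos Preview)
Your argument is correct and is essentially the standard proof of Block's lemma. Note, however, that the paper does not actually prove this statement: it is quoted as a classical result with a citation to \cite{Block1967} and then applied, so there is no ``paper's own proof'' to compare against. Your three steps --- the commutation relation $A^T P_g = Q_g A^T$ forcing $A^T\chi_{O_i}$ to be constant on $G$-orbits of $X'$, the disjoint-support independence of the $\chi_{O_i}$ combined with injectivity of $A^T$ under the full-row-rank hypothesis, and the dimension count giving $s\le t$ with equality forcing $A^T\langle\chi_{O_i}\rangle = \langle\chi_{O'_j}\rangle$ --- are exactly the ingredients one finds in standard treatments, and nothing is missing.
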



Let $G \leqslant \PGammaL(n,q)$, $n \geq 4$, let $2 \leq k < n$, $d \leq k$ and $d \leq n/2$, and let $X$, 
respectively $X'$ be the set of $d$-spaces, respectively $k$-spaces of $\Ff_q^{n}$. 
The incidence, i.e. the symmetrised set theoretic containment, 
between an element of $X$ and $X'$ is $G$-invariant. Furthermore, the incidence matrix is the incidence matrix 
of the $k$-space design of $\Ff_q^{n}$, and this matrix has full row rank by \cite{Bose1949}. 
If $s=t$, i.e. if $G$ has equally many orbits on the $d$-spaces as on the $k$-spaces, then the characteristic 
vector of each of the orbits of $k$-spaces lies in $\im(A^T)$. 

In \cite{CL1982}, Cameron
and Liebler studied collineation groups having equally many point as line orbits, that is $(d,k) = (1,2)$.
Conjecture~\ref{conj:CL} translates in this context that such
a group is line transitive, or fixes a hyperplane and acts transitively on the lines of the hyperplanes, or, dually, fixes a point and acts transitively 
on the lines through the fixed point. 

We call two $k$-spaces of $\Ff_q^{n}$ {\em skew} if and only if they only share the zero vector. A partition 
of the points of $\Ff_q^{n}$ in $k$-spaces is called a {\em spread of $\Ff_q^{n}$ in $k$-spaces}. It is well known 
that such a spread exists if and only if $k \mid n$, e.g. when $n=4$ and $k=2$, there are spreads of lines in $\Ff_q^{n}$.
By Proposition 3.1 of \cite{CL1982}, for any set $L$ of lines of $\Ff_q^{4}$, $\chi_L \in \im(A^T) \iff |L \cap S| = x$ 
(a natural number, only depending on $L$), and for any line spread $S$ of $\Ff_q^{4}$.
Often, a Cameron-Liebler line class of $\Ff_q^{4}$ is defined using its characterization with relation to 
line spreads of $\Ff_q^{4}$. When $k \nmid n$, a set $K$ of $k$-spaces of $\Ff_q^{n}$ is
called a Cameron-Liebler set $k$-spaces of $\Ff_q^{n}$ if and only if $\chi_{K} \in \im(A^T)$. This is 
the case $d=1$ and $k \geq 1$ found in a geometrical context in \cite{BDBD2019}.

Note that the statements of Block's lemma are only unidirectional, i.e. an orbit of $k$-spaces under a collineation group with equally 
many point orbits as orbits on $k$-spaces is a Cameron-Liebler set of $k$-spaces, but the converse is not true, the union of all $k$-spaces through a fixed
point $P$ and contained in a fixed hyperplane not through $P$ is a Cameron-Liebler set of $k$-spaces which is not the orbit under a collineation group 
with equally many point as $k$-space orbits.

We do not know if a classification of such
subgroups of $\PGammaL(n, q)$ is feasible, 
but we will see in \S\ref{sec:symp}
that the symplectic group $\Sp(n, q)$ provides us with some examples
when $n$ and $d$ are even.

\section{Divisibility Conditions} \label{sec:div_cond}

In the following we summarize known divisibility conditions
based on the survey by
Braun, Kiermaier, Wassermann \cite{BKW2018}.

\subsection{Small Parameters}

For $(n,k) = (6,3)$, a $2$-$(6, 3, c(q+1))$ design has size $c(q^3+1)[5]$.
Existence is known for $(q,c) = (2,1), (3,3), (4,2), (5,13)$.
Hence, for $q=2,3,4,5$ we obtain that $|\cF|$
needs to be divisible by $5, 10, 17, 2$, respectively.

\smallskip 

For $(n,k)=(7,3)$, a $2$-$(7, 3, \lambda)$ design has size $\lambda (q^2-q+1) [7]$.
Existence is known for $(q, \lambda) = (2,3), (3,5), (4,21), (5,31)$.
Hence, for $q=2,3,4,5$ we obtain that $|\cF|$
needs to be divisible by $[5] = \frac{q^5-1}{q-1}$.

\smallskip 

For $(n,k)=(8,4)$, a $2$-$(8, 4, c(q^2+q+1))$ design has size $c (q^4+1)[7]$.
Existence is known for $(q,c) = (2,7),(3,455),(4,5733),(5,20181)$.
Hence, for $q=2,3,4,5$ we obtain that $|\cF|$
needs to be divisible by $93, 121, 341, 781$, respectively.

\subsection{Suzuki's construction}

Let $q$ be a prime and $n \geq 7$ be an integer
satisfying $\text{gcd}(n,4!)=1$.
Then there exists a $2$-$(n,3,q^2+q+1)$ design. See \cite[Theorem 11]{BKW2018}.
Hence,

\begin{Lemma}
  Let $\cF$ be a degree $2$ family of $3$-spaces in $\Ff_q^n$.
  Then $(q^3-1) |\cF|$ is divisible by $q^{n-2}-1$.
\end{Lemma}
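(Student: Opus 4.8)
The plan is to derive the divisibility from the design-orthogonality statement in Corollary~\ref{cor:div}, applied to the design produced by Suzuki's construction. Under the standing hypotheses ($q$ prime, $n \geq 7$, $\gcd(n, 4!) = 1$), Theorem~11 of~\cite{BKW2018} furnishes a $2$-$(n, 3, q^2+q+1)$ design $\cD$ in the Grassmann graph $J_q(n, 3)$. First I would pin down $|\cD|$: by the double-counting identity recalled just before Corollary~\ref{cor:div}, $|\cD| = \lambda \gauss{n}{2}/\gauss{3}{2}$ with $\lambda = q^2+q+1$; since $\gauss{3}{2} = \gauss{3}{1} = q^2+q+1$, this collapses to $|\cD| = \gauss{n}{2}$.

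Next, since $\cF$ has degree $2$ and $n \geq 7 > 6 = 2\cdot 3$, Corollary~\ref{cor:div} applies with $k = 3$, $d = 2$ and yields $|\cF \cap \cD| = |\cF|\cdot|\cD|/\gauss{n}{3} = |\cF| \cdot \gauss{n}{2}/\gauss{n}{3}$. Using the product formula $\gauss{n}{k} = \prod_{i=0}^{k-1}[n-i]/[k-i]$, the ratio $\gauss{n}{2}/\gauss{n}{3}$ simplifies to $[3]/[n-2] = (q^3-1)/(q^{n-2}-1)$, so that $|\cF \cap \cD| = |\cF|\,(q^3-1)/(q^{n-2}-1)$.

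Finally, $|\cF \cap \cD|$ is a nonnegative integer, so $q^{n-2}-1$ divides $(q^3-1)|\cF|$, which is the assertion. This argument is essentially a bookkeeping exercise; the only point requiring a moment's care is checking that the hypotheses of Suzuki's construction and of Corollary~\ref{cor:div} are simultaneously satisfied (they are, for the stated $q$ and $n$) and verifying the clean cancellation of $\lambda$ against $\gauss{3}{2}$ and of the Gaussian binomials. No genuine obstacle is anticipated.
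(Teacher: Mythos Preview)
Your argument is correct and is precisely the approach the paper takes: the paper states Suzuki's construction (Theorem~11 of \cite{BKW2018}) and simply writes ``Hence'' before the lemma, leaving the reader to feed the resulting design into Corollary~\ref{cor:div} and carry out exactly the cancellation you wrote down. Your proof is a faithful unpacking of that one-word deduction.
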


For instance, for $n=11$ and $q=2,3,4,5$ or $7$: 
$(q^3-1)|\cF|$ is divisible by $511, 19\, 682, 262\,143, 1\, 953\, 124$ 
or $40\,353\,606$, respectively.

\subsection{More Conditions in the Binary Case}

\begin{Lemma}
Let $m\geq 3$. Suppose that $\cF$ is a set 
of $3$-spaces in $\Ff_2^n$ of degree 2, then the following holds:
\begin{enumerate}[(a)]
\item If $n=8m$, then $C|\cF|$ is divisible by $2^{8m-2}-1$, where $C\in \{42,312\}$.
\item If $n=9m$, then $42 \cdot |\cF|$ is divisible by $2^{9m-2}-1$.
\item If $n=10m$, then $210 \cdot |\cF|$ is divisible by $2^{10m-2}-1$.
\item If $n=13m$, then $42 \cdot |\cF|$ is divisible by $2^{13m-2}-1$.
\end{enumerate}
\end{Lemma}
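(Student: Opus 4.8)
The plan is to derive each of (a)--(d) from the existence of a suitable $2$-design in $J_2(n,3)$ together with Corollary~\ref{cor:div}, proceeding exactly as in the treatment of Suzuki's construction above but with $q = 2$ fixed throughout.

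First I would record the general mechanism. Let $\cD$ be a $2$-$(n,3,\lambda)$ design of $J_2(n,3)$ with characteristic function $g$, and let $\cF$ be a degree $2$ subset of $J_2(n,3)$ with characteristic function $f$. As $n \geq 6$, Corollary~\ref{cor:div} applies and gives
\[
  |\cF \cap \cD| = \frac{|\cF| \cdot |\cD|}{\gauss{n}{3}}.
\]
By the standard double count $|\cD| = \lambda\gauss{n}{2}/\gauss{3}{2} = \lambda\gauss{n}{2}/7$, while the product formula for Gaussian binomials gives $\gauss{n}{3} = \gauss{n}{2}\cdot(2^{n-2}-1)/7$. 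Substituting, the integer $|\cF\cap\cD|$ equals $\lambda|\cF|/(2^{n-2}-1)$, so $2^{n-2}-1$ divides $\lambda|\cF|$. This short computation, through Corollary~\ref{cor:div}, carries all of the degree-$2$ input of the proof.

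It then remains to supply the correct values of $\lambda$. Unwinding the displayed divisibility, statement (a) holds once we know that for every $m \geq 3$ there exists a $2$-$(8m,3,42)$ design or else a $2$-$(8m,3,312)$ design; similarly (b) follows from a $2$-$(9m,3,42)$ design, (c) from a $2$-$(10m,3,210)$ design, and (d) from a $2$-$(13m,3,42)$ design, the last ultimately resting on the binary $q$-Steiner system $2$-$(13,3,1)$. I would quote these infinite families of $3$-uniform subspace designs over $\Ff_2$ from the survey \cite{BKW2018} and the recursive and product constructions cited there; feeding $\lambda \in \{42,312\}$ for $n = 8m$, $\lambda = 42$ for $n = 9m$, $\lambda = 210$ for $n = 10m$ and $\lambda = 42$ for $n = 13m$ into the mechanism above then produces (a)--(d) respectively.

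The main obstacle is therefore not the short computation but the design-theoretic input: one has to locate in \cite{BKW2018} the precise statements producing these designs for all $m \geq 3$, check that the constructions there genuinely output the claimed multipliers $\lambda$, and in particular understand why $m \geq 3$ is the correct threshold and why case (a) really requires the two alternative values $42$ and $312$ according to $m$.
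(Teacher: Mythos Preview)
Your proposal is correct and follows essentially the same approach as the paper: the paper's proof simply cites \cite[Section 5.2]{BKW2018} for the existence of the relevant $2$-$(n,3,\lambda)_2$ designs (with exactly the parameters you list), the divisibility mechanism via Corollary~\ref{cor:div} having already been established in the preceding discussion. You have merely spelled out explicitly the computation $|\cF\cap\cD|=\lambda|\cF|/(2^{n-2}-1)$ that the paper leaves implicit.
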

\begin{proof}
By \cite[Section 5.2]{BKW2018}, there exist 
(a) $2$-$(8m,3, C)_2$ designs for $C \in \{ 42, 312\}$,
(b) $2$-$(9m,3, 42)_2$ designs for $m\geq 3$,
(c) $2$-$(10m,3, 210)_2$ designs for $m\geq 3$, and
(d) $2$-$(13m,3, 42)_2$ designs for $m\geq 3$.
\end{proof}

\end{document}